\documentclass[11pt]{amsart}
\usepackage{rlepsf, graphicx, amssymb, %epstopdf, 
amsmath, amssymb}%, draftcopy}

\textwidth = 6.0 in
\oddsidemargin = 0.25 in
\evensidemargin = 0.25 in
\topmargin = 0.25 in

\theoremstyle{plain}
\newtheorem{thm}{Theorem}[section]
\newtheorem{lem}[thm]{Lemma}
\newtheorem{cor}[thm]{Corollary}

\theoremstyle{definition}
\newtheorem*{Def}{Definition}

\newtheorem{rem}[thm]{Remark}

\theoremstyle{remark}

\def\C{\mathbb{C}}
\def\R{\mathbb{R}}
\def\Z{\mathbb{Z}}

\def\dfn#1{{\em #1}}

\title[Fibered Transverse Knots]{Fibered Transverse Knots and the Bennequin Bound}
\author{John B.\ Etnyre}
\address{
    School of Mathematics,
    Georgia Institute of Technology,
    686 Cherry St.,
    Atlanta, GA  30332-0160}
\email{etnyre@math.gatech.edu}
\urladdr{http://math.gatech.edu/\char126 etnyre}
%\thanks{The first author was partially supported by NSF Career Grant (DMS-0239600) and FRG-0244663. We acknowledge stimulating conversations with Matthew Hedden that inspired the work in this paper.}

\author{Jeremy Van Horn-Morris}
\address{CIRGET, Universit\'e du Qu\'ebec \`a Montr\'eal, Case Postale 8888, succursale Centre-Ville, Montr\'eal (Qu\'ebec) H3C 3P8, Canada} 
\email{jvanhorn@math.utexas.edu}
\urladdr{}

%\date{\today}
\begin{document}
\begin{abstract} 
We prove that a nicely fibered link (by which we mean the binding of an open book) in a tight contact manifold $(M,\xi)$ with zero Giroux torsion has a transverse representative realizing the  Bennequin bound if and only if the contact structure it supports (since it is also the binding of an open book) is $\xi.$ This gives a geometric reason for the non-sharpness of the Bennequin bound for fibered links. We also give the classification, up to contactomorphism, of maximal self-linking number links in these knot types. Moreover, in the standard tight contact structure on $S^3$ we classify, up to transverse isotopy, transverse knots with maximal self-linking number in the knots types given as closures of positive braids and given as fibered strongly quasi-positive knots. We derive several braid theoretic corollaries from this. In particular, we give conditions under which quasi-positive braids are related by positive Markov stabilizations and when a minimal braid index representative of a knot is quasi-positive. Finally, we observe that our main result can be used to show, and make rigorous the statement, that contact structures on a given manifold are in a strong sense classified by the transverse knot theory they support. 
\end{abstract}
\maketitle

%%%%%%%%%%%%%%%%%%%%%%%%%%%%%%%%%%%%%%%%%%%%%%%%%%%%%%%%
\section{Introduction}
%%%%%%%%%%%%%%%%%%%%%%%%%%%%%%%%%%%%%%%%%%%%%%%%%%%%%%%%
In \cite{Bennequin83}, Bennequin established the following inequality for knots $K$ transverse to the standard tight contact structure on $S^3$
\[
sl(K)\leq -\chi(\Sigma),
\]
where $\Sigma$ is any Seifert surface for $K$ and $sl(K)$ denotes the self-linking number of $K.$ Eliashberg proved this inequality for transverse knots and links in any tight contact structure \cite{Eliashberg92a}. Since that time there has been much work finding other bounds for transverse knots in $S^3$ \cite{Etnyre05,FuchsTabachnikov97,Ferrand02} and some work finding other bounds in general manifolds \cite{Hedden??, LiscaMatic96}.  For some links, other bounds are more restrictive and the Bennequin inequality is not sharp.  In general there is little geometric understanding of the knot types for which the Bennequin inequality is sharp. Building on an interesting result of Hedden \cite{Hedden??} discussed below, our first goal in this paper is to rectify this situation for fibered links by giving a specific necessary and sufficient condition for sharpness. Once we have done this we will discuss several corollaries concerning the classification of transversal knots, braids representing quasi-positive links and the determination of a contact structure via its transverse knot theory.

\subsection{Sharpness in the Bennequin inequality}
We say an oriented link $L$ in a 3-manifold $M$ is fibered if there is a fibration $\pi:(M\setminus L)\to S^1$ such that the closure of any fiber is an embedded surface whose boundary, as an oriented manifold, is equal to $L.$ This is a slightly more restrictive definition of a fibration in a general 3-manifolds than is usually given and, of course, is equivalent to $(L,\pi)$ being an open book decomposition for $M$.  As there might be many different fibrations with binding $L$, we use the notation $(L, \Sigma)$ to emphasize a particular fiber surface $\Sigma$.  Given an open book decomposition $(L, \Sigma)$, Thurston and Winkelnkemper constructed in \cite{ThurstonWinkelnkemper75} a contact structure $\xi_{(L,\Sigma)}$ that is supported by the fibration.  Examining the construction, one easily sees that $L$ is naturally a transverse link in $\xi_{(L,\Sigma)}$ and the self-linking number of $(L, \Sigma)$ is $-\chi(\Sigma)$. Thus we see that fibered links in a tight contact manifold that support the given contact structure have transverse representatives realizing equality in the Bennequin inequality. More generally, we say that the fibered link $(L, \Sigma)$ in a contact manifold $(M,\xi)$ realizes the Bennequin bound if there is a transverse realization of $L$ such that  $sl_\xi{(L,\Sigma)} = -\chi(\Sigma)$, noting that if $H_2(M; \Z)$ is non-trivial, then the self-linking number may depend on the surface $\Sigma.$  In \cite{Hedden08}, Hedden showed that in the standard tight contact structure on $S^3$ the Bennequin bound is sharp for a fibered knot if and only if the contact structure defined by the knot is tight. Hedden used a Heegaard-Floer invariant of knots to establish this result and proves a similar result for contact manifolds with non-vanishing contact invariant (thought, unlike in the case of $S^3,$ the exact contact structure supported by the knot cannot necessarily be determined). In general, one would not expect the simple algebraic condition of equality in the Bennequin inequality for a transverse representative of a link could have the strong geometric consequence that the link is the binding of an open book decomposition supporting the ambient contact structure, however we show that this is essentially the case.

\begin{thm}\label{mainknot}
Let $M$ be a closed 3-manifold and $\xi$ a tight contact structure on $M.$  A fibered link $(L, \Sigma)$ realizes the Bennequin bound in $(M,\xi)$ if and only if $\xi$ is supported by the open book $(L, \Sigma)$ or is obtained from $\xi_{(L,\Sigma)}$ by adding Giroux torsion.
\end{thm}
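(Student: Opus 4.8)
The plan is to treat the two implications separately, with essentially all of the work going into the reverse direction.

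For the forward (``if'') direction I would argue directly from the construction recalled above. If $\xi = \xi_{(L,\Sigma)}$, then the Thurston--Winkelnkemper construction exhibits $L$ as a transverse link with $sl_\xi(L,\Sigma) = -\chi(\Sigma)$, so the Bennequin bound is realized. Adding Giroux torsion is a modification supported in a neighborhood of an embedded torus that can be placed in the interior of the complement of a neighborhood of $L$ and taken transverse to the pages; since it leaves a neighborhood of $L$ and the characteristic foliation of the pages near $L$ unchanged, $L$ survives as a transverse link with the same self-linking number. Hence any $\xi$ obtained from $\xi_{(L,\Sigma)}$ by adding Giroux torsion still realizes the bound.

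For the reverse direction, fix a transverse representative of $L$ with $sl_\xi(L,\Sigma) = -\chi(\Sigma)$. The first step is to extract rigidity from this extremal value via the characteristic foliation $\Sigma_\xi$. Writing $e_\pm, h_\pm$ for the numbers of positive and negative elliptic and hyperbolic singularities, Poincar\'e--Hopf gives $\chi(\Sigma) = (e_+ + e_-) - (h_+ + h_-)$ while the self-linking number satisfies $sl_\xi(L,\Sigma) = -\chi(\Sigma) + 2(e_- - h_-)$. Realizing the bound therefore forces $e_- = h_-$, and, using tightness together with the Giroux elimination lemma to cancel negative elliptic--hyperbolic pairs, I would arrange $\Sigma_\xi$ to have only positive singularities, with $L$ transverse to $\xi$ and no limit cycles. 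This ``positive'' (braid-type) characteristic foliation is precisely the one a page carries in a supported open book: the flow emanates from positive elliptic points, passes through positive hyperbolic saddles, and runs outward to the transverse boundary.

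The second, harder step is to globalize this page-by-page picture to the whole fibration. Removing a standard transverse neighborhood $N(L)$, the complement $M' = M \setminus N(L)$ is the mapping torus of the truncated page $\Sigma' = \Sigma \setminus N(L)$, and $\xi|_{M'}$ is a contact structure on $M'$ whose behavior on $\partial M' = \partial N(L)$ is prescribed by the standard transverse neighborhood. Making the truncated pages convex, the extremal self-linking translates into the statement that each page is convex with dividing set consisting only of boundary-parallel arcs and no homotopically essential closed curves, i.e.\ a product-type dividing set. I would then invoke the classification of tight contact structures carried by such a convex $\Sigma'$-bundle: the contact planes must twist monotonically and positively as one traverses the $S^1$ direction, so $\xi|_{M'}$ agrees with the standard supported structure up to insertion of $T^2 \times I$ layers of $\pi$-twisting, which is exactly Giroux torsion, while tightness excludes any negative twisting (it would produce an overtwisted disk). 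Reassembling with the fixed neighborhood $N(L)$ then shows $\xi$ is either $\xi_{(L,\Sigma)}$ or obtained from it by adding Giroux torsion. I expect the main obstacle to be precisely this globalization: the characteristic foliation pins $\xi$ down only in a neighborhood of a single page, so the genuine content is controlling how $\xi$ evolves through the entire mapping torus and proving that the only freedom consistent with tightness and the prescribed boundary data is Giroux torsion, which is where the convex-surface classification on $\Sigma'$-bundles and the underlying $T^2 \times I$ analysis carry the weight.
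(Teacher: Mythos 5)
Your forward direction and the first step of the reverse direction match the paper: the self-linking formula $sl = -\chi(\Sigma) + 2(e_- - h_-)$ forces $e_- = h_-$, and an elimination argument (using tightness and the basin of a negative elliptic point) removes all negative singularities so that the page becomes convex with dividing set disjoint from the transverse boundary. The gap is in your second step. First, your intermediate claim is wrong: extremal self-linking does \emph{not} force the dividing set of a page to consist only of boundary-parallel curves. What one actually gets (via the formula $sl(\partial\Sigma) = -(\chi(\Sigma_+)-\chi(\Sigma_-))$ together with tightness, which rules out contractible dividing curves) is $\chi(\Sigma_-)=0$, i.e.\ $\Sigma_-$ is a union of annuli, so the dividing set is a boundary-parallel curve plus some number of pairs of parallel \emph{essential} closed curves. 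Those essential closed dividing curves are exactly what produce the Giroux torsion in the conclusion; if the dividing set were genuinely product-type the torsion case could never arise, so your claim contradicts the statement you are trying to prove.

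Second, the ``classification of tight contact structures carried by a convex $\Sigma'$-bundle'' that you propose to invoke does not exist as an off-the-shelf result; establishing it in the form needed here is the actual content of the proof. The paper does this by doubling a single page to a convex Heegaard surface $D\Sigma$ bounding two handlebodies $H_1\cong H_2\cong\Sigma\times[0,1]$, introducing the notion of a contact structure \emph{quasi-compatible} with the open book (a contact vector field transverse to the pages and tangent to the binding, not required to be Reeb), and proving two reduction lemmas: if the monodromy does not preserve the dividing set $\Gamma$ up to isotopy, or if $\xi$ is not quasi-compatible, then the Imbalance Principle and a bypass-sliding lemma of Honda--Kazez--Mati\'c produce a bypass that strictly decreases the number of dividing curves or of null-homologous components. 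Iterating terminates either in $\Gamma=\emptyset$ (Torisu's criterion then gives compatibility) or in quasi-compatibility, and a separate model-building lemma identifies quasi-compatible structures as $\xi_{(L,\Sigma)}$ with half-twists added along the incompressible tori swept out by the orbits of the dividing curves under the monodromy. You correctly identify the globalization as the main obstacle, but you do not supply an argument for it, and without something playing the role of the quasi-compatibility machinery and the bypass-reduction lemmas the proof is incomplete.
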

%We note that when $M=S^3$ this result is due to Hedden \cite{Hedden??, Hedden08}
We have the following immediate corollaries.
\begin{cor}\label{cor:atoroidal}
If $\xi$ is a tight contact structure on $M$ with zero Giroux torsion (for example if $M$ is atoroidal), then a fibered link $(L, \Sigma)$ realizes the Bennequin bound in $(M,\xi)$ if and only if $\xi$ is supported by $(L, \Sigma)$.
\end{cor}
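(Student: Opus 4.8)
The plan is to deduce Corollary 1.3 directly from Theorem 1.1 by simply specializing to the case of zero Giroux torsion. Theorem 1.1 characterizes realization of the Bennequin bound by a dichotomy: either $\xi$ is supported by the open book $(L,\Sigma)$, or $\xi$ is obtained from $\xi_{(L,\Sigma)}$ by adding Giroux torsion. The entire content of the corollary is to observe that, under the hypothesis that $\xi$ has zero Giroux torsion, the second alternative collapses into the first, leaving only the clean biconditional.

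First I would invoke Theorem 1.1 to reduce $(L,\Sigma)$ realizing the Bennequin bound to the stated disjunction. Then I would argue the second branch cannot produce anything new beyond the first. The key point is that if $\xi$ is obtained from $\xi_{(L,\Sigma)}$ by adding a nonzero amount of Giroux torsion, then $\xi$ itself would contain Giroux torsion, contradicting the hypothesis. Hence, when the Giroux torsion of $\xi$ is zero, the only way $\xi$ can arise in the dichotomy is the zero-torsion case, which is precisely the statement that $\xi$ is supported by $(L,\Sigma)$. Conversely, if $\xi = \xi_{(L,\Sigma)}$ is supported by the open book, then by the discussion preceding Theorem 1.1 the binding $L$ is transverse with $sl_\xi(L,\Sigma) = -\chi(\Sigma)$, so the Bennequin bound is realized; this direction holds regardless of torsion.

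For the parenthetical remark about atoroidal manifolds, I would recall that Giroux torsion is an invariant detected by incompressible pre-Lagrangian (or convex) tori carrying an appropriate foliation, so a contact structure with nonzero Giroux torsion necessarily contains an incompressible torus. Since an atoroidal manifold admits no incompressible tori, every tight contact structure on such a manifold automatically has zero Giroux torsion, so the atoroidal hypothesis is a special case of the zero-torsion hypothesis.

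I expect no substantive obstacle here, as the corollary is a formal consequence of the main theorem. The only point requiring care is making precise the sense in which "adding nonzero Giroux torsion to $\xi_{(L,\Sigma)}$" yields a contact structure with nonzero Giroux torsion, so that the zero-torsion hypothesis genuinely rules it out; this amounts to verifying that the torsion-adding operation is additive (or at least monotone) on the relevant torsion invariant, which follows from its definition via the embedded torsion layers.
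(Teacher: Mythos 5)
Your proposal is correct and matches the paper, which states this corollary as an immediate consequence of Theorem~\ref{mainknot}: the zero-torsion hypothesis eliminates the second alternative of the dichotomy, and the converse direction follows from the Thurston--Winkelnkemper discussion preceding the theorem. Your added care about why adding torsion produces a contact structure with nonzero Giroux torsion (the glued-in layer is itself an embedded torsion region) is exactly the right point to check, and it goes through as you describe.
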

\begin{cor}\label{paclass}
If $\xi$ is a tight contact structure on $M$, then a fibered knot with pseudo-Anosov monodromy $(L,\Sigma)$ realizes the Bennequin bound in $(M,\xi)$ if and only if $\xi$ is supported by $(L,\Sigma).$
\end{cor}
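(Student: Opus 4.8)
The plan is to derive this directly from Theorem~\ref{mainknot}, reducing everything to ruling out the Giroux torsion alternative. The ``if'' direction is immediate from the Thurston--Winkelnkemper discussion in the introduction: if $\xi$ is supported by $(L,\Sigma)$ then $L$ has a transverse representative with $sl_\xi(L,\Sigma)=-\chi(\Sigma)$, so the bound is realized. For the converse, Theorem~\ref{mainknot} says that if $(L,\Sigma)$ realizes the bound in the tight manifold $(M,\xi)$ then either $\xi=\xi_{(L,\Sigma)}$, which is the desired conclusion, or $\xi$ is obtained from $\xi_{(L,\Sigma)}$ by adding Giroux torsion. So the whole content of the corollary is to show that, when the monodromy is pseudo-Anosov, the second alternative is impossible.

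First I would use the hypothesis topologically. The binding complement $M\setminus L$ fibers over $S^1$ with monodromy $\phi\colon\Sigma\to\Sigma$, i.e.\ it is the mapping torus of $\phi$; since $\phi$ is pseudo-Anosov, Thurston's hyperbolization theorem for fibered $3$--manifolds shows $M\setminus L$ carries a complete finite-volume hyperbolic metric. In particular $M\setminus L$ is atoroidal: every incompressible torus in it is isotopic into the single cusp torus $\partial N(L)$. I would then observe that the Giroux torsion in the second alternative is added in the complement of the binding, so the torsion domain $T^{2}\times[0,1]$ may be taken disjoint from $L$, i.e.\ inside $M\setminus L$. Because $\xi$ is tight, the standard fact that the tori of a torsion domain in a tight manifold are incompressible applies, yielding an incompressible torus $T\subset M\setminus L$; atoroidality then forces $T$ to be boundary-parallel, hence isotopic to $\partial N(L)$.

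To close the argument I would exploit the difference between $M\setminus L$ and $M$: although $\partial N(L)$ is incompressible in $M\setminus L$ (the cusp torus of a hyperbolic manifold is incompressible), it bounds the solid torus $N(L)$ and is therefore compressible in $M$. Thus the torsion domain would sit along a torus that compresses in $M$, and it is standard that full Giroux torsion along a compressible torus forces the contact structure to be overtwisted, contradicting the tightness of $\xi$. Hence no torsion can be added and $\xi=\xi_{(L,\Sigma)}$, so $\xi$ is supported by $(L,\Sigma)$. I expect the main obstacle to be precisely this last circle of ideas: pinning down that ``adding Giroux torsion'' really produces an incompressible torus that can be taken disjoint from $L$, and invoking cleanly the fact that full torsion along a compressible torus is incompatible with tightness. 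Note that one cannot simply quote Corollary~\ref{cor:atoroidal} here, since $M$ itself need not be atoroidal; the argument genuinely needs that the \emph{binding complement} is atoroidal together with the compressibility of $\partial N(L)$ in $M$. By comparison the hyperbolicity input from Thurston is routine.
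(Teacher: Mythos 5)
Your argument is correct and is essentially the paper's own: the corollary is stated as immediate from Theorem~\ref{mainknot} together with Theorem~\ref{thm:G-compat}, and the reasoning you supply (pseudo-Anosov monodromy makes $M\setminus L$ atoroidal, so any torsion would be added along a boundary-parallel torus, i.e.\ a full Lutz twist along $L$, which is incompatible with tightness) is exactly the reasoning the authors spell out later in the proof of Theorem~\ref{classifies}. No substantive difference.
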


Given an oriented nicely fibered link $L$ in a rational homology sphere, there is a natural homotopy class of plane field associated to $L.$  Namely, take the homotopy class of the contact structure associated to the open book with binding $L$.  Another immediate corollary of our main theorem is the following which also follows using Heegaard-Floer theory as shown in \cite{Hedden08}.
\begin{cor}
Let $M$ be a rational homology sphere and $L$ an oriented fibered link in $M.$
If the homotopy class of plane field associated to $L$ is not the same as the homotopy class of a given tight contact structure $\xi$, the Bennequin inequality is a strict inequality. 
\end{cor}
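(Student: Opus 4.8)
The plan is to prove the contrapositive: assuming the fibered link $(L,\Sigma)$ realizes the Bennequin bound in $(M,\xi)$, I would show that the homotopy class of $\xi$ coincides with the homotopy class of plane field associated to $L$, namely the class of $\xi_{(L,\Sigma)}$. Since $\xi$ is tight and $M$ is closed, Theorem~\ref{mainknot} applies and tells us that one of two things happens: either $\xi$ is supported by the open book $(L,\Sigma)$, or $\xi$ is obtained from $\xi_{(L,\Sigma)}$ by adding Giroux torsion. In the first case, by Giroux's correspondence the contact structure supported by a fixed open book is unique up to isotopy, so $\xi$ is isotopic, hence homotopic as a plane field, to $\xi_{(L,\Sigma)}$; this is by definition the homotopy class associated to $L$, and we are done.

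The second case is the only real content, and it reduces to the fact that adding Giroux torsion does not alter the homotopy class of the underlying oriented $2$-plane field. I would verify this directly from the local model. Adding torsion modifies $\xi_{(L,\Sigma)}$ only inside a neighborhood $T^2\times[0,1]$ of a pre-Lagrangian torus, replacing the plane field there by $\ker(\cos(2\pi n t)\,dx-\sin(2\pi n t)\,dy)$, a model in which the planes make an integer number $n\geq 1$ of full rotations as $t$ ranges over $[0,1]$ and which therefore agrees with the original plane field on $T^2\times\{0,1\}$. Choosing a metric for which $dx,dy,dt$ is an orthonormal coframe, the corresponding unit-normal (Gauss) map sends $(x,y,t)$ to the point $(\cos 2\pi n t,-\sin 2\pi n t,0)\in S^2$, independent of $(x,y)$; it thus factors through the projection $T^2\times[0,1]\to[0,1]$ as the loop winding $n$ times around the equator of $S^2$. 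Because $\pi_1(S^2)=0$, this loop is homotopic rel endpoints to the constant loop at the original boundary normal, and pulling such a homotopy back along the projection produces a homotopy of plane fields on $T^2\times[0,1]$, rel $T^2\times\{0,1\}$, from the torsion model to the original. Extending by the identity outside the neighborhood shows that $\xi$ and $\xi_{(L,\Sigma)}$ are homotopic as plane fields, so again their homotopy classes agree with the class associated to $L$.

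Combining the two cases, whenever the Bennequin bound is realized the homotopy class of $\xi$ equals that of $\xi_{(L,\Sigma)}$, which is the homotopy class of plane field associated to $L$; contrapositively, if these two classes differ then the bound cannot be realized and the Bennequin inequality is strict, as claimed. I expect the Giroux-torsion case to be the only genuine obstacle, and the key observation that makes it routine is that the full $2\pi n$ rotation inserted by a torsion layer is a null-homotopic loop in $S^2$. The hypothesis that $M$ is a rational homology sphere plays a supporting role: it guarantees that the self-linking number is independent of the chosen surface (as $H_2(M;\Z)=0$) and provides the setting in which the homotopy class of plane field associated to $L$ is well defined.
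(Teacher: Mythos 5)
Your proposal is correct and follows the route the paper intends: the corollary is stated there as an immediate consequence of Theorem~\ref{mainknot}, with the Giroux-torsion case handled by the fact (asserted later in the paper, in the proof of Theorem~\ref{classifies}) that adding Giroux torsion, i.e.\ a full Lutz twist, does not change the homotopy class of the plane field. Your explicit Gauss-map verification of that fact, using $\pi_1(S^2)=0$ to contract the $n$-fold equatorial loop rel endpoints and pulling the homotopy back over $T^2\times[0,1]$, is a sound fleshing-out of the step the paper leaves unproved.
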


It would be interesting to better understand if the difference between $-\chi(\Sigma)$ and the maximal self-linking number of a fibered knot $K$ in a tight contact structure $\xi$ was somehow related to the difference between the Euler class and Hopf invariant of $\xi$ and the plane field associated to $K.$

Since one implication in Theorem~\ref{mainknot} is obvious from the discussion above, the theorem is a consequence of the following.

\begin{thm}  \label{thm:G-compat}
Let $(L, \Sigma)$ be a fibered transverse link in a contact 3-manifold $(M, \xi)$ and assume that $\xi$ is tight when restricted to $M\setminus L$.  If $sl_\xi{(L,\Sigma)} = -\chi(\Sigma)$, then either
\begin{enumerate}
\item $\xi$ is supported by $(L, \Sigma)$ or
\item $\xi$ is obtained from $\xi_{(L,\Sigma)}$ by adding Giroux torsion along tori which are incompressible in the complement of $L$.  
\end{enumerate}
\end{thm}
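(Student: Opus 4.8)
The plan is to read off the Bennequin equality from the characteristic foliation on a page and to show that it forces a rigid normal form. First I would put $L$ in a standard positively-transverse normal form and make the interior of a fixed page $\Sigma$ convex, so that $\xi$ cuts out a dividing set $\Gamma\subset\Sigma$ splitting $\Sigma$ into its positive and negative regions $\Sigma_\pm$, arranged so that $\Gamma$ is disjoint from $L=\partial\Sigma$ and a collar of $L$ lies in $\Sigma_+$. In this normal form the self-linking number is computed by the convex-surface formula $sl_\xi(L,\Sigma)=\chi(\Sigma_-)-\chi(\Sigma_+)$, while $\chi(\Sigma)=\chi(\Sigma_+)+\chi(\Sigma_-)$. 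Combining the two, the hypothesis $sl_\xi(L,\Sigma)=-\chi(\Sigma)$ is equivalent to the single condition $\chi(\Sigma_-)=0$.

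Next I would bring in the hypothesis that $\xi$ is tight on $M\setminus L$. Since every page sits inside the complement of $L$, Giroux's tightness criterion applies to $\Sigma$: no component of $\Gamma$ bounds a disk, and consequently $\Sigma_-$ has no disk components and each of its components has non-positive Euler characteristic. Together with $\chi(\Sigma_-)=0$ this forces every component of $\Sigma_-$ to be an annulus cobounded by two \emph{essential} dividing curves. This is exactly the dichotomy behind the two conclusions of the theorem: either $\Sigma_-=\emptyset$, or $\Sigma_-$ is a nonempty disjoint union of essential annuli.

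In the first case the characteristic foliation on $\Sigma$ has only positive singularities and $L$ is a positive transverse boundary; carrying this normalization through the whole $S^1$-family of pages should produce a contact form whose Reeb field is positively transverse to the pages and positively tangent to $L$, so that $\xi$ is supported by $(L,\Sigma)$ (here one invokes the Giroux--Torisu characterization of the supported contact structure as the one induced by such adapted page data). In the second case I would track the dividing curves bounding the negative annuli as a page is carried once around the monodromy: over $\theta\in S^1$ these circles sweep out, possibly after grouping into monodromy-orbits, a disjoint union of embedded tori in $M\setminus L$, transverse to the fibration, and incompressible precisely because the dividing curves are essential in $\Sigma$. The negative regions then fill in $T^2\times[0,1]$ collars of these tori on which $\xi$ records genuine twisting, and the aim is to identify this twisting as Giroux torsion and to show that excising the torsion layers recovers $\xi_{(L,\Sigma)}$.

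The hard part will be the global, rather than page-by-page, nature of the second case. The single-page normal form must be upgraded to a coherent normal form over the entire mapping torus, which requires controlling the one-parameter Morse-theoretic births and deaths of singularities as $\theta$ varies and checking that the monodromy carries one normalized page to the next. Even granting this, one must verify that the model on each $T^2\times[0,1]$ collar is \emph{exactly} a Giroux torsion layer rather than some other tight contact structure on $T^2\times[0,1]$, and that cutting these layers out glues up to $\xi_{(L,\Sigma)}$ rather than merely to some supported structure; I expect this identification, leaning on the classification of tight contact structures on $T^2\times[0,1]$ and on surface bundles over $S^1$, to be the technical heart of the proof.
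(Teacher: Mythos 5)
Your opening reduction is exactly right and matches the paper: normalize the characteristic foliation, compute $sl(\partial\Sigma)=-\bigl(\chi(\Sigma_+)-\chi(\Sigma_-)\bigr)$, combine with $\chi(\Sigma)=\chi(\Sigma_+)+\chi(\Sigma_-)$ to get $\chi(\Sigma_-)=0$, and use tightness of $\xi|_{M\setminus L}$ to rule out disk components so that $\Sigma_-$ is a union of essential annuli. (One caveat even here: for a convex surface with \emph{transverse} boundary the Legendrian realization principle does not apply directly, so you cannot immediately invoke the Giroux criterion; the paper first cancels all negative singularities near $\partial\Sigma$ and arranges boundary-parallel dividing curves that ``protect'' the boundary before this step is legitimate.)

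The genuine gap is in your second case. You assume that the core curves of the negative annuli ``sweep out tori'' as the page goes once around the fibration, but this presupposes that the monodromy $\phi$ preserves the dividing set $\Gamma$ up to isotopy, and that the contact structure on the complement of the page is a product. Neither is automatic: a single page with $\Sigma_-$ a union of essential annuli can have $\phi(\Gamma)$ not isotopic to $\Gamma$, in which case no coherent family of tori exists, and even when $\phi(\Gamma)=\Gamma$ the structure between pages need not be $I$-invariant. Your proposed fix --- a one-parameter Morse-theoretic ``movie'' of characteristic foliations over the $S^1$ of pages --- gives no mechanism for resolving this, and such parametric arguments are exactly the ones that break down for convex surfaces with transverse boundary. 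The paper's resolution is different in kind: it works with the Heegaard surface $D\Sigma$ (the double of a page), introduces the notion of \emph{quasi-compatibility}, and runs a complexity-reducing induction on $\Gamma$. When $\phi(\Gamma)\not\simeq\Gamma$, the Imbalance Principle applied to an annulus over a suitable curve produces a bypass, and the Honda--Kazez--Mati\'c bypass-sliding lemma is used in a case analysis to either decrease the number of dividing curves or the number of null-homologous components; when $\phi(\Gamma)\simeq\Gamma$ one either achieves quasi-compatibility or again finds a simplifying bypass. Only after this induction terminates does one get monodromy-invariant essential curves, and the identification of $\xi$ as $\xi_{(L,\Sigma)}$ with half-twists added along the resulting incompressible tori comes from a Torisu-style uniqueness argument (the contact structure is determined by the dividing sets on $D\Sigma$ and on product disks) together with an explicit quasi-compatible model. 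Without some substitute for this reduction step, your outline does not close.
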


The main new technical idea used in the proofs of our theorems is the idea of a contact structure being quasi-compatible with an open book. That is a contact structure $\xi$ is quasi-compatible with an open book $(L,\Sigma)$ if there is a contact vector field positively transverse to the pages and positively tangent to the boundary of the open book. If the contact vector field is a Reeb vector field for $\xi$ then the open book is actually compatible. We expect further study of this generalization of compatibility to be useful in understanding questions about Giroux torsion, and hope to return to this in a future paper. Several of the arguments in the proofs our theorems can be thought of as extending standard convex surface theory to convex surfaces with transverse boundary. As is well known to the experts, many of the most useful theorems do not carry over to this situation; however, we show that some of the theorems do carry over, though sometimes in a somewhat restricted form. 

\subsection{The classification of transverse knots with maximal possible self-linking number}
Not only do we understand when a fibered link realizes the Bennequin inequality, we can also classify maximal self-linking number transverse links when this bound is achieved.  We say a transverse link $L$ has a \emph{maximal self-linking number} if there is some Seifert surface $\Sigma$ for $L$ such that $sl_{\Sigma}(L)=-\chi(\Sigma).$
%if it is equal to the Thurston norm for some nontrivial subset of $H_2(M-\nu L,\partial (M-\nu L))$.  Two transverse links $L_1$ and $L_2$ have the same self-linking number if $sl_\xi(L_1)$ and $sl_\xi(L_2)$ represent the same element in $Hom (H_2(M-\nu L,\partial (M-\nu L)), \mathbb{Z})$.  The following theorem implies that for fibered transverse links realizing the Bennequin bound, the self-linking number is the only transverse invariant.

\begin{thm}\label{knotthm}
Suppose that $L$ is a fibered link in $S^3$ that supports the standard tight contact structure. There is a unique transverse link, up to transverse isotopy, in the topological knot type of $L$ with self-linking number $-\chi(L)$, where $\chi(L)$ denotes the maximal Euler characteristic of surfaces with boundary $L.$ 

Moreover, suppose $L$ is a fibered link in a 3-manifold $M$ and the fibration $(L,\pi,\Sigma)$ supports a tight contact structure with Giroux torsion equal to zero. Any two transverse representatives of $L$ with $sl=-\chi(\Sigma)$ are contactomorphic. 
\end{thm}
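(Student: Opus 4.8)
The plan is to reduce both statements to Corollary~\ref{cor:atoroidal} and then invoke a uniqueness principle for the open book realization of a fixed abstract open book supporting a fixed contact structure. In both cases the point is that realizing the Bennequin bound forces a transverse representative to be the binding of a supporting open book, after which the only remaining freedom is how that abstract open book sits inside $(M,\xi)$.

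For the $S^3$ statement, I would take an arbitrary transverse representative $L'$ of the topological type of $L$ with $sl_{\xi_{std}}(L') = -\chi(L) = -\chi(\Sigma)$ (the fiber $\Sigma$ realizes the maximal Euler characteristic for a fibered link). Since $S^3$ is atoroidal, $\xi_{std}$ is tight with zero Giroux torsion, so Corollary~\ref{cor:atoroidal} applies directly and shows that $\xi_{std}$ is supported by the fibration of $L'$. Because $L'$ and $L$ have the same fibered link type, their fiber surfaces are isotopic and their monodromies agree up to conjugation, so the two open books are abstractly isomorphic; by hypothesis $(L,\Sigma)$ also supports $\xi_{std}$. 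Thus $L$ and $L'$ are the bindings of two abstractly isomorphic open books, each supporting $\xi_{std}$. The general statement runs identically: with $\xi = \xi_{(L,\Sigma)}$ tight and of zero Giroux torsion, Corollary~\ref{cor:atoroidal} shows that each transverse representative $L_i$ with $sl_\xi(L_i) = -\chi(\Sigma)$ is the binding of an open book supporting $\xi$ and abstractly isomorphic to $(L,\Sigma)$.

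It then remains to prove the uniqueness principle: if $(L_0,\Sigma)$ and $(L_1,\Sigma)$ are abstractly isomorphic open books both supporting $\xi$, their bindings are related by a contactomorphism of $(M,\xi)$, and by a contact isotopy when $M = S^3$. I would realize the abstract isomorphism by a diffeomorphism $\Phi$ of $M$ carrying $(L_0,\Sigma)$ to $(L_1,\Sigma)$ page-to-page (smoothly isotopic to the identity when $M=S^3$). Then $\xi$ and $\Phi^*\xi$ are two contact structures each supported by the single open book $(L_0,\Sigma)$; since the set of contact structures supported by a fixed open book is non-empty and connected, there is a path $\xi_t$ of supported structures joining them. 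Running Gray stability for $\xi_t$ in a form that keeps the isotopy tangent to the open book produces an ambient isotopy $\psi_t$ fixing $(L_0,\Sigma)$ with $(\psi_1)_*\xi = \Phi^*\xi$, whence $\Phi\circ\psi_1$ is a contactomorphism of $(M,\xi)$ taking $L_0$ to $L_1$. In the $S^3$ case one then checks that $\Phi\circ\psi_1$ is contact-isotopic to the identity, so $L_0$ and $L_1$ are transversely isotopic.

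The main obstacle is this last uniqueness step, and specifically arranging the Gray stability isotopy to preserve the binding as a transverse link rather than only producing an abstract contactomorphism. I expect this to be exactly where the convex surface theory for convex surfaces with transverse boundary, already developed for Theorem~\ref{thm:G-compat}, is needed: one should normalize $\xi$ in a neighborhood of the binding and along the pages so that the interpolation $\xi_t$ is realized by an isotopy tangent to the open book. Upgrading a contactomorphism to a genuine transverse isotopy in the $S^3$ case additionally relies on the connectedness of the contactomorphism group of $(S^3,\xi_{std})$, ensuring that $\Phi\circ\psi_1$, a priori only smoothly isotopic to the identity, is in fact contact-isotopic to it.
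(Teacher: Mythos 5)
Your proposal is correct and follows essentially the same route as the paper: it deduces from Theorem~\ref{thm:G-compat} (packaged as Corollary~\ref{cor:atoroidal}, using zero Giroux torsion to exclude the torsion alternative) that each maximal self-linking representative is the binding of an open book supporting $\xi$, and then applies Giroux uniqueness of supported contact structures together with Gray stability to obtain the contactomorphism, upgrading to a transverse isotopy in $S^3$ via connectedness of the contactomorphism group of $(S^3,\xi_{std})$. The paper's proof is the same argument, merely phrased through quasi-compatibility (maximal $sl$ forces the dividing set to be the boundary plus parallel pairs, and zero torsion kills the pairs) and with the Gray-stability step stated more tersely.
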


\begin{rem} It may be of interest to note that the contactomorphism in Theorem~\ref{knotthm} is topologically trivial and induced by an isotopy.  For a pair of maximal fibered links, $(L_0, \Sigma)$ and $(L_1, \Sigma)$, there is an isotopy of pairs $(\xi_t, L_t)$, for $0 \leq t \leq 1$, where $\xi_t$ is a contact structure, $L_t$ is a transverse link in $\xi_t$, and $\xi_0 = \xi = \xi_1$.  In other words, once we allow the contact structure to deform, the two links are transversely isotopic.  In particular, such pairs would be all almost indistinguishable by a transverse link invariant.
\end{rem}

\begin{rem}
Notice that if $L$ is a fibered link in a homology sphere $M$ and $(L, \Sigma)$ supports a tight contact structure with Giroux torsion equal to zero, then the self-linking numbers of the components of a transverse representative of $L$ with maximal self-linking number are fixed. For a general link, one can have several different transverse realizations of the link with maximal total self-linking number, but with different self-linking numbers on the components. The last theorem says this cannot happen in the situation we are considering. 
\end{rem}

\begin{rem}
For overtwisted manifolds, there is no bound on the Giroux torsion.  In this case, there is a statement completely contrary to the uniqueness of the Theorem~\ref{knotthm}. In forthcoming work it will be shown that if 
%\begin{thm} \label{thm:OT}
$(L, \Sigma)$ supports an overtwisted contact structure, $\xi$, then there are infinitely many non-isotopic transverse knots with self-linking number $-\chi(\Sigma)$ in $\xi$, each having a distinct contact structure on the complement.
%\end{thm}
\end{rem}

One might hope that knot types of fibered knots in $S^3$ that support the standard tight contact structure (or perhaps any a contact manifold with zero Giroux torsion) might be transversely simple (that is transverse knots in this knot type are determined by their self-linking number).  This is not the case. For example, the $(2,3)$-cable of the $(2,3)$-torus knot is a fibered knot that supports the tight contact structure on $S^3,$ but in \cite{EtnyreHonda05} it was shown not to be transversely simple. Nonetheless, Theorem~\ref{knotthm} gives a starting point for the classification of such knots. According to the strategy laid out in \cite{Etnyre99}, to classify knots in the knot types under consideration one would only need to show that any transverse knot in the topological knot type with non-maximal self-linking number can be destabilized. While this is the case for some such knots (e.g., positive torus knots), it is not true for others (e.g., the $(2,3)$-cable of the $(2,3)$-torus knot). Thus, to give a classification for a given knot type one needs to understand the destabilization issue: determine those transverse knots that destabilize and classify those that do not. In any event, Theorem~\ref{knotthm} gives a rather good start to the classification of transverse  knots in several cases. 

\subsection{Braid theory and quasi-positivity}
We note the following simple corollary of Theorem~\ref{knotthm}.

\begin{cor}\label{classifymax}
Let $L_B$ be the closure of a positive braid $B$ in $S^3.$ Then there is a unique, up to transverse isotopy, transverse link in the standard tight contact structure on $S^3$ with the knot type of $L_B$ and self-linking number equal to $-\chi(L)=-n+a,$ where $n$ is the braid index of $B$ and $a$ is the length of the word expressing $B$ in terms of the standard generators of the braid group.  

More generally, if $L$ is a fibered strongly quasi-positive link in $S^3$ bounding the quasi-positive Seifert surface $\Sigma$ then there is a unique, up to transverse isotopy, transverse link in the standard tight contact structure on $S^3$ with the knot type of $L$ and self-linking number equal to $-\chi(\Sigma).$
\end{cor}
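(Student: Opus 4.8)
The plan is to deduce both assertions from Theorem~\ref{knotthm}, so the work lies entirely in verifying its hypotheses: that $L$ is a fibered link whose fiber is the given surface, that this surface is genus-minimizing (so that $-\chi(\Sigma)$ really is the maximal self-linking number), and that the open book $(L,\Sigma)$ supports the standard tight contact structure $\xi_{std}$ on $S^3$. Since the closure of a positive braid is strongly quasi-positive (the standard generators $\sigma_i$ are among the embedded positive band generators), the first assertion is a special case of the second, except for the explicit identification $-\chi(L)=-n+a$. I would therefore treat the general strongly quasi-positive case and read off the braid statement at the end.

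First I would recall the topological input. By a classical theorem of Stallings the closure of a positive braid is fibered, its fiber being the Bennequin surface built from $n$ disks and one positively twisted band per letter of the word; this gives $\chi(\Sigma)=n-a$, hence $-\chi(\Sigma)=-n+a$. For a general fibered strongly quasi-positive link, Rudolph's theorem that quasi-positive Seifert surfaces maximize Euler characteristic shows that the quasi-positive surface $\Sigma$ is genus-minimizing, and hence, by uniqueness of the fiber among minimal-genus surfaces of a fibered link, is isotopic to the fiber of the fibration. In either case $-\chi(\Sigma)$ equals the maximal Euler characteristic appearing in Theorem~\ref{knotthm}.

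The key remaining step is that $(L,\Sigma)$ supports $\xi_{std}$. Rather than appealing to the Heegaard-Floer argument of \cite{Hedden08}, I would obtain this from Corollary~\ref{cor:atoroidal}. A strongly quasi-positive link is the closure of a braid that is a product of $k$ band generators on $n$ strands; each band generator is a conjugate of a standard generator and so has exponent sum $1$, whence the braid has writhe $k$ and its transverse braid closure has $sl = k-n = -\chi(\Sigma)$. Thus this transverse representative realizes the Bennequin bound. Since $\xi_{std}$ is Stein fillable it has zero Giroux torsion, so Corollary~\ref{cor:atoroidal} applies and forces $(L,\Sigma)$ to support $\xi_{std}$. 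With all hypotheses of Theorem~\ref{knotthm} verified, the theorem yields the unique transverse representative with $sl=-\chi(\Sigma)$, and specializing to positive braids gives the value $-n+a$. For positive braids one may alternatively bypass Corollary~\ref{cor:atoroidal} entirely: the monodromy is a product of right-handed Dehn twists, so $\xi_{(L,\Sigma)}$ is Stein fillable, hence tight, hence equal to $\xi_{std}$ by Eliashberg's uniqueness \cite{Eliashberg92a}.

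I expect the main obstacle to be the correct bookkeeping in the supporting step: one must be sure that the quasi-positive surface used to compute the Bennequin bound is exactly the fiber surface $\Sigma$ to which Theorem~\ref{knotthm} is applied, and that the transverse braid closure realizing $sl=-\chi(\Sigma)$ is taken with respect to that same surface. Once the surface is pinned down and the zero-Giroux-torsion hypothesis for $\xi_{std}$ is in place, the argument is a direct assembly of Theorem~\ref{knotthm}, Corollary~\ref{cor:atoroidal}, and the classical fiberedness and Euler-characteristic computations.
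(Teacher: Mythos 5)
Your proof is correct and follows the same skeleton as the paper's: Stallings for fiberedness, Bennequin's formula $sl = a-n$ for the transverse closure of a braid, and Theorem~\ref{knotthm} for existence and uniqueness. The one place you genuinely diverge is in how you verify the hypothesis of Theorem~\ref{knotthm} that $(L,\Sigma)$ supports the standard tight contact structure: the paper leaves this step implicit (relying on Hedden's Heegaard--Floer result, quoted in the introduction, that a fibered link in $S^3$ supports $\xi_{\mathrm{std}}$ if and only if it is strongly quasi-positive), whereas you derive it internally from Corollary~\ref{cor:atoroidal} by first exhibiting a transverse representative realizing the Bennequin bound and using that $S^3$ is atoroidal. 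This is more self-contained, and it also forces you to make explicit a point the paper glosses over: that the quasi-positive Bennequin surface is isotopic to the fiber (Rudolph's genus minimality together with the uniqueness of minimal-genus Seifert surfaces for fibered links), so that the self-linking number computed from the band presentation, $k-n$, really equals $-\chi$ of the fiber surface to which Theorem~\ref{knotthm} is applied. Your alternative route for positive braids, via Stein fillability of $\xi_{(L,\Sigma)}$ from the positive Dehn twist monodromy and Eliashberg's uniqueness of the tight contact structure on $S^3$, is also valid and bypasses the Bennequin-bound argument entirely in that case.
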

Recall a link is called \dfn{strongly quasi-positive} if it can be represented in some braid group $B_n$ as a word of the form $\Pi_{k=1}^m \sigma_{i_k,j_k}$ where $\sigma_1,\ldots, \sigma_{n-1}$ are the standard generators of the braid group and 
\[
\sigma_{i,j}= (\sigma_i\ldots \sigma_{j-2})\sigma_{j-1}(\sigma_i\ldots\sigma_{j-2})^{-1}.
\]
For a thorough discussion of strongly quasi-positive links we refer to \cite{Rudolph05}, but we note that positive knots --- that is knots with projections having only positive crossings --- are strongly quasi-positive. So this corollary applies to fibered positive knots. While the appearance of strongly quasi-positive links may appear mysterious in this corollary, it is really quite natural given Corollary~\ref{cor:atoroidal} above and Hedden's observation \cite{Hedden??} that that a fibered link in $S^3$ supports the standard tight contact structure on $S^3$ if and only if it is strongly quasi-positive. 

We now explore some consequence of our theorems above to problems in braid theory. Specifically to problems concerning positive braids and more generally quasi-positive braids. Recall a braid is \dfn{quasi-positive} if is a product of conjugates of the standard generators of the braid group and a link is quasi-positive if it can be represented by a quasi-positive braid. Clearly strongly quasi-positive links are quasi-positive. By way of motivation for this definition we note that quasi-positive links are precisely those that are formed by transversely intersecting the unit 3-sphere in $\C^2$ with a complex analytic plane curve \cite{BoileauOrevkov01, Rudolph83}. Moreover Orevkov has an approach to Hilbert's sixteenth problem concerning real algebraic curves using quasi-positivity \cite{Orevkov99}. In \cite{Orevkov00} two fundamental questions concerning quasi-positive links were asked. They were

\smallskip
\begin{minipage}[h]{5.5in}
\em Given two quasi-positive braids representing a fixed link, are they related by positive Markov moves (and braid conjugation)?
\end{minipage}

\smallskip
\noindent
and
\smallskip

\begin{minipage}[h]{5.5in}
\em Given a quasi-positive link, is any minimal braid index braid representative of the link quasi-positive?
\end{minipage}

\smallskip
\noindent
We have some partial answers below, but first we need an observation. It is well know, see \cite{Bennequin83}, that the closure of a braid $B$ gives a transverse link in the standard tight contact structure on $S^3$ and moreover its self-linking number is $a(B)-n(b)$ where $a(B)$ is the algebraic length of $B$ and $n(B)$ is the braid index of $B.$ Thus the Bennequin inequality can be written
\[
a(B)\leq n(B)-\chi(L_B)
\] 
where $L_B$ is the link obtained as the closure of $B$ and $\chi(L_B)$ is maximal Euler characteristic of a Seifert surface for $L.$ With this in mind we have the following criterion for a braid to be quasi-positive. 

\begin{thm}\label{qp-rep}
Let $L$ be a fibered strongly quasi-positive link and $B$ any braid representing $L.$ Then $a(B)=n(B)-\chi(L)$ if and only if $B$ is quasi-positive.
\end{thm}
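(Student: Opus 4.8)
The plan is to prove the two implications separately, and the key observation is that the forward implication (quasi-positivity forces the equality) is elementary, while the reverse implication is exactly where the contact-geometric classification of Theorem~\ref{knotthm} and Corollary~\ref{classifymax} does the real work.

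For the ``if'' direction, suppose $B$ is quasi-positive, written as a product $B=\prod_{k=1}^{m}w_k\sigma_{i_k}w_k^{-1}$ of $m$ conjugates of positive generators. Since each such factor has exponent sum $1$, we have $a(B)=m$. Rudolph's construction of the quasi-positive (Bennequin) surface \cite{Rudolph05} produces an embedded Seifert surface $F$ for $L=\hat B$ assembled from $n(B)$ disks and $m$ positively embedded bands, so that $\chi(F)=n(B)-m=n(B)-a(B)$. As $\chi(L)$ is by definition the maximal Euler characteristic over all Seifert surfaces for $L$, this gives $\chi(L)\geq n(B)-a(B)$. On the other hand, the closure $\hat B$ is a transverse link of self-linking number $a(B)-n(B)$, so the Bennequin inequality yields $a(B)-n(B)\leq -\chi(L)$. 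The two inequalities together force $a(B)=n(B)-\chi(L)$, and I note that this direction uses neither fiberedness nor strong quasi-positivity.

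For the ``only if'' direction, assume $a(B)=n(B)-\chi(L)$, so that the transverse closure $\hat B$ has self-linking number $a(B)-n(B)=-\chi(L)$, which is maximal. Since $L$ is fibered and strongly quasi-positive, Hedden's characterization shows the open book with binding $L$ supports the standard tight contact structure on $S^3$, and its fiber surface $\Sigma$ satisfies $\chi(\Sigma)=\chi(L)$. Because $S^3$ is atoroidal, Corollary~\ref{classifymax} applies and gives a \emph{unique} transverse representative of $L$ with self-linking number $-\chi(L)$, up to transverse isotopy. Now let $B_0$ be a strongly quasi-positive braid representing $L$; being quasi-positive, the ``if'' direction shows $\hat{B_0}$ also realizes the maximal self-linking number, so $\hat B$ and $\hat{B_0}$ are transversely isotopic. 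By the transverse Markov theorem of Orevkov--Shevchishin and of Wrinkle, $B$ and $B_0$ are then related by a finite sequence of braid conjugations and positive Markov (de)stabilizations.

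It remains to show that quasi-positivity propagates along this sequence of moves, and this is where I expect the main obstacle to lie. Invariance under conjugation is immediate, and invariance under positive stabilization is clear, since appending $\sigma_n$ to a quasi-positive word in $B_n\subset B_{n+1}$ keeps it a product of conjugates of positive generators. The delicate case is positive \emph{destabilization}: given a quasi-positive braid $B'\sigma_n\in B_{n+1}$ with $B'\in B_n$, one must exhibit a quasi-positive word for $B'$ itself, and the difficulty is that the conjugating words $w_k$ in a quasi-positive factorization of $B'\sigma_n$ may genuinely involve the destabilized strand, so that one cannot simply strip off a single band of the associated quasi-positive surface. The approach I would take is to work with that surface directly, cancelling the disk and band corresponding to the destabilized strand while preserving positivity of the remaining bands, and to use the contact-geometric input --- the uniqueness of the maximal transverse representative and the fact that every braid in the sequence carries the same maximal self-linking number --- to rule out the configurations in which the destabilized strand is nontrivially linked by the other bands. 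Establishing this preservation closes the argument, and it is precisely the statement that underlies the corollaries on positive Markov stabilization and on minimal braid index representatives announced in the introduction.
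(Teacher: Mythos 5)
Your overall architecture matches the paper's, but both directions have a genuine gap. In the ``if'' direction, the surface built from a quasi-positive factorization $B=\prod_k w_k\sigma_{i_k}w_k^{-1}$ is \emph{not} an embedded Seifert surface in $S^3$: only for \emph{strongly} quasi-positive words can the bands be embedded in the disk fibers, and for a general quasi-positive braid the resulting surface is merely ribbon-immersed in $S^3$, becoming embedded only after pushing into $B^4$. Consequently the inequality $\chi(L)\geq n(B)-a(B)$ does not follow from the definition of $\chi(L)$ as a maximum over Seifert surfaces. The paper instead treats this surface as a slice surface of genus $\frac 12(1-n(B)+a(B))$ and compares it with the quasi-positive Seifert surface of a strongly quasi-positive representative $B'$, which by Rudolph \cite{Rudolph93} (resting on the local Thom conjecture) realizes the minimal \emph{slice} genus; this is exactly where strong quasi-positivity and fiberedness enter. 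Your remark that this direction ``uses neither fiberedness nor strong quasi-positivity'' is a symptom of the error: for a quasi-positive link whose slice genus is strictly smaller than its Seifert genus, a quasi-positive braid representative has $a(B)<n(B)-\chi(L)$, so the implication you claim to have proved unconditionally is false.

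In the ``only if'' direction you correctly reduce, via Corollary~\ref{classifymax} and the transverse Markov theorem, to showing that quasi-positivity survives positive destabilization, but you then leave precisely that step open. This is not something to be finessed by cancelling a disk-and-band pair or by further appeal to the uniqueness of the maximal transverse representative: it is a theorem of Orevkov \cite{Orevkov00}, quoted in the paper as Theorem~\ref{qpifstab} (a braid is quasi-positive if and only if any positive stabilization of it is), and its proof goes through the characterization of quasi-positive links by algebraic curves rather than through contact geometry. The paper simply invokes this result; without it, or an equivalent, your argument does not close.
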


Using this theorem we can show the following.
\begin{thm}\label{stab}
Let $L$ be a fibered strongly quasi-positive link. Any two quasi-positive braids representing $L$ are related by positive Markov moves and braid conjugation. 
\end{thm}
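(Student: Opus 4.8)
The plan is to combine Theorem~\ref{qp-rep}, Corollary~\ref{classifymax}, and the transverse Markov theorem of Orevkov--Shevchishin and Wrinkle. Recall that the latter states that two braids have transversely isotopic closures in the standard tight contact structure on $S^3$ if and only if they are related by braid conjugation together with positive stabilizations and destabilizations. Since ``positive Markov moves and braid conjugation'' is precisely the equivalence relation generated by these moves, the theorem will follow once I show that the closures of any two quasi-positive braids representing $L$ are transversely isotopic.

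First I would pin down the self-linking numbers. Let $B_0$ and $B_1$ be quasi-positive braids whose common closure is the fibered strongly quasi-positive link $L$, and let $\Sigma$ be a fiber surface, so $\chi(\Sigma)=\chi(L)$ is the maximal Euler characteristic. Recall that the closure of a braid $B$ is a transverse link with self-linking number $a(B)-n(B)$. Applying Theorem~\ref{qp-rep} to each $B_i$ (using the direction that a quasi-positive braid satisfies $a(B_i)=n(B_i)-\chi(L)$), I obtain $sl(L_{B_i})=a(B_i)-n(B_i)=-\chi(L)=-\chi(\Sigma)$. Thus both $L_{B_0}$ and $L_{B_1}$ are transverse representatives of the knot type of $L$ that realize the maximal self-linking number, regardless of their (possibly different) braid indices.

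Next I would invoke Corollary~\ref{classifymax}. Because $L$ is fibered and strongly quasi-positive, that corollary guarantees a \emph{unique}, up to transverse isotopy, transverse link in the standard tight contact structure on $S^3$ in the knot type of $L$ with self-linking number $-\chi(\Sigma)$. Hence $L_{B_0}$ and $L_{B_1}$, which share this knot type and this self-linking number, are transversely isotopic. The transverse Markov theorem then shows that $B_0$ and $B_1$ are related by braid conjugation and positive Markov moves, which completes the argument.

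The proof is short precisely because all the geometric content has been absorbed into Corollary~\ref{classifymax}; the only genuine step is verifying that its hypotheses are met, namely that both braids sit at the top of the Bennequin bound. This is exactly where Theorem~\ref{qp-rep}, and through it the fiberedness and strong quasi-positivity of $L$, is indispensable: without the equality $a(B_i)=n(B_i)-\chi(L)$ one could not conclude that the two closures have maximal self-linking number, and the uniqueness statement would not apply. I expect the only subtlety to be bookkeeping, ensuring that ``positive Markov moves'' in Orevkov's question matches the stabilization/destabilization relation furnished by the transverse Markov theorem, rather than any new difficulty. It is worth noting that this answers Orevkov's first question affirmatively for this class of links, and that the reasoning does not extend to arbitrary quasi-positive links, where maximal self-linking representatives need not be unique.
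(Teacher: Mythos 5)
Your proof is correct and follows essentially the same route as the paper: apply Theorem~\ref{qp-rep} to see that the closures of the quasi-positive braids realize the maximal self-linking number $-\chi(L)$, invoke the uniqueness statement of Corollary~\ref{classifymax} to get a transverse isotopy, and finish with the transverse Markov theorem of Orevkov--Shevchishin and Wrinkle. The extra bookkeeping you supply (computing $sl(L_{B_i})=a(B_i)-n(B_i)$ explicitly) is consistent with the paper's argument.
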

We point out an interesting special case of this theorem in the following immediate corollary. 
\begin{cor}
Any two positive braids representing a link $L$ are related by positive Markov moves and braid isotopy.\qed
\end{cor}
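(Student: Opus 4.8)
The plan is to obtain this corollary as the special case of Theorem~\ref{stab} in which the two quasi-positive braids happen to be positive; thus the only real task is to verify that the hypotheses of Theorem~\ref{stab} are met by the closure $L$ of a positive braid. First I would observe that every positive braid is strongly quasi-positive, and hence quasi-positive. Indeed, in the notation of the definition preceding Theorem~\ref{qp-rep}, the conjugating word $\sigma_i\ldots\sigma_{j-2}$ is empty when $j=i+1$, so $\sigma_{i,i+1}=\sigma_i$; therefore any positive word $\prod_{k}\sigma_{i_k}$ is already of the strongly quasi-positive form $\prod_{k}\sigma_{i_k,\,i_k+1}$.

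Next I would recall that the closure of a positive braid is a fibered link. This is Stallings' fibration theorem: the Seifert surface built from the braid word (equivalently, the surface obtained by applying Seifert's algorithm to the positive braid closure) is a fiber surface, so $L$ is fibered with this $\Sigma$. Combining the two observations, $L$ is a fibered strongly quasi-positive link, which is exactly the hypothesis required by Theorem~\ref{stab}.

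It then remains only to apply Theorem~\ref{stab}. Given two positive braids $B_0$ and $B_1$ whose closures are both $L$, each is in particular a quasi-positive braid representing the fibered strongly quasi-positive link $L$, so Theorem~\ref{stab} asserts that $B_0$ and $B_1$ are related by positive Markov moves together with conjugation, as claimed. There is essentially no obstacle in this argument; the single point requiring care is the appeal to Stallings' theorem, since Theorem~\ref{stab} is stated only for fibered links and it is the positivity of the braid---not merely strong quasi-positivity of $L$---that guarantees $L$ is fibered.
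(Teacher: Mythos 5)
Your proof is correct and takes the same route the paper intends: the paper marks this corollary as immediate from Theorem~\ref{stab}, and you simply spell out why the hypotheses hold (positive braids are strongly quasi-positive since $\sigma_{i,i+1}=\sigma_i$, and their closures are fibered by Stallings), which are exactly the facts the paper itself invokes in the surrounding text and in the proof of Corollary~\ref{classifymax}.
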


To state our result for minimal braid index representatives of (strongly) quasi-positive knots we recall a conjecture of K.~Kawamuro, \cite{Kawamuro06}. Given a link type $L$ with braid index $b>0$ is there an integer $w$ such that any braid $B$ whose closure is in the link type $L$ satisfies 
\[
b+|a(B)-w|\leq n(B).
\]
We call this the Kawamuro Braid Geography Conjecture. One easily sees that this generalizes the Jones conjecture that there is a unique value for $a(B)$ among braids with braid index $b$ realizing the link type $L.$ Moreover, one may check that this conjecture is true for any knot type for which the Morton-Franks-Williams (MFW) inequality is sharp. Recall the MFW-inequality says that for a link type $L$ we have 
\[a({B})-n({B})+1\leq d_-(L)\leq d_+(L)\leq a({B})+n({B})-1\]
where $B$ is any braid whose closure is in the link type $L$ and $d_+(L)$ ($d_-(L)$) is the maximal (minimal) degree in the $a$-variable of the two variable Jones polynomial. One says the inequality is sharp if there is a fixed braid for which the first and last inequalities are equalities. Sharpness is known for closures of positive braids that contain a full right handed twist \cite{FranksWilliams87} and for alternating fibered links \cite{Murasugi91}. It is known that the MFW-inequality is not always sharp, but it is still unknown if the Braid Geography Conjecture is true or not; however, in \cite{Kawamuro06} it was shown that the class of links for which the Braid Geography Conjecture is true is closed under cabling and connected sums. 
\begin{thm}\label{minrep}
If $L$ is a fibered strongly quasi-positive knot type satisfying the Braid Geography Conjecture then any braid with minimal braid index representing $B$ is quasi-positive. 

In particular, if $L$ is the closure of a positive braid with a full twist (or a sufficiently positive cable or connected sum of such knots), then any braid of minimal index representing $L$ is quasi-positive. 
\end{thm}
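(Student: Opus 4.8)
The plan is to combine the Braid Geography Conjecture with Theorem~\ref{qp-rep} and the braid form of the Bennequin inequality; the deduction is then a short manipulation of the two quantities $a(B)$ and $n(B)$. Write $b$ for the braid index of $L$ and $w$ for the integer supplied by the Braid Geography Conjecture. Two facts set the stage. First, since $L$ is strongly quasi-positive it has a quasi-positive band presentation $B^\ast$, and the ``if'' direction of Theorem~\ref{qp-rep} gives $a(B^\ast)=n(B^\ast)-\chi(L)$, i.e.\ a braid with $a(B^\ast)-n(B^\ast)=-\chi(L)$. Second, the Bennequin inequality $a(B)-n(B)\le-\chi(L)$ holds for every braid $B$ representing $L$.

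Next I would unwind the conjecture. The inequality $b+|a(B)-w|\le n(B)$ rearranges to $a(B)-n(B)\le w-b$ for every braid $B$ representing $L$; applying it to $B^\ast$ gives $-\chi(L)\le w-b$. On the other hand, any minimal braid index representative $B_0$ has $n(B_0)=b$, so the conjecture forces $|a(B_0)-w|\le0$, whence $a(B_0)=w$ and $a(B_0)-n(B_0)=w-b$; the Bennequin bound then gives $w-b\le-\chi(L)$. Combining the two inequalities yields $w-b=-\chi(L)$, so that
\[
a(B_0)=w=b-\chi(L)=n(B_0)-\chi(L).
\]
With this equality in hand the ``only if'' direction of Theorem~\ref{qp-rep}, applied to the braid $B_0$ representing the fibered strongly quasi-positive link $L$, concludes that $B_0$ is quasi-positive, proving the first assertion.

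For the ``in particular'' statement I would verify the hypotheses of the first part. Closures of positive braids are fibered by Stallings and are automatically strongly quasi-positive, since each standard generator is the band generator $\sigma_{i,i+1}$; and when the braid contains a full twist the Morton--Franks--Williams inequality is sharp by \cite{FranksWilliams87}, which (as noted above) implies the Braid Geography Conjecture. Hence the first part applies. The cases of sufficiently positive cables and connected sums follow because \cite{Kawamuro06} shows the Braid Geography Conjecture is preserved under cabling and connected sum, once one also checks that fibered strong quasi-positivity is preserved under these operations with the stated positivity.

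I expect no real obstacle in the core deduction, which is purely formal; the points that require care are the input facts rather than the inference. Specifically, one must make sure the maximal self-linking number is genuinely realized and equal to $-\chi(L)$ (here guaranteed by the quasi-positive band presentation together with Theorem~\ref{qp-rep}), and, in the ``in particular'' clause, that the cabling and connected-sum constructions simultaneously preserve both the Braid Geography Conjecture and fibered strong quasi-positivity under the hypotheses on positivity.
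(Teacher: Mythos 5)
Your proof is correct and follows essentially the same route as the paper: both use the Braid Geography Conjecture at a minimal-index representative to pin down $a(B_0)=w$ and then compare with the maximal self-linking number $-\chi(L)$ realized by a strongly quasi-positive representative. The only cosmetic difference is that you invoke Theorem~\ref{qp-rep} as a black box at the end, whereas the paper re-traces that theorem's proof directly (via Corollary~\ref{classifymax}, Theorem~\ref{transmarkov}, and Theorem~\ref{qpifstab}); the underlying argument is identical, and your explicit caveat about cabling/connected sums preserving fibered strong quasi-positivity in the ``in particular'' clause is a reasonable point of care that the paper leaves implicit.
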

%We note that for $L$ as in the end of the theorem any minimal braid index braid representing $L$ can be made into a positive braid by strictly positive Markov stabilizations. It is interesting to ask if a quasi-positive braid becomes positive after a positive stabilization then is it actually positive?

\subsection{Transverse classification of contact structures}
As a last application of our main theorems we note that transverse knots classify contact structures. More specifically if ${K}$ denotes a topological knot type in a manifold $M$ then we denote by $\mathcal{T}^\xi({K})$ the set of knots in $M$ that are isotopic to ${K}$ and transverse to $\xi.$ Moreover we set
\[
\mathcal{T}_n^\xi({K})=\{T\in{T}^\xi({K}) : sl(T)=n \}
\]
for any integer $n.$ Here we could take $\mathcal{T}^\xi({K})$ to be transverse knots up to contactomorphism or up to transverse isotopy, in this paper we take them up to contactomorphism. 
(Notice that if $\mathcal{K}$ bounds non-homologous Seifert surfaces then we should consider pairs of transverse knots and Seifert surfaces.) We say a collection of knots $\mathcal{C}=\{{K}_\alpha\}_{\alpha\in A}$ in $M,$ indexed by some set $A,$ \dfn{transversely classifies contact structures on $M$} up to contactomorphism, respectively isotopy, if two contact structures $\xi_1$ and $\xi_2$ on $M$ are contactomorphic, respectively isotopic, if and only if $|\mathcal{T}_n^{\xi_1}({K}_\alpha)|=|\mathcal{T}_n^{\xi_2}({K}_\alpha)|$ for all $\alpha\in A$ and $n\in \Z.$ 
\begin{thm}\label{classifies}
For a given manifold $M$ the collection 
\[
\mathcal{C}_\text{fpa}=\{\text{fibered knots in $M$ with pseudo-Anosov monodromy}\}
\]
transversely classifies contact structures on $M$ up to isotopy.
\end{thm}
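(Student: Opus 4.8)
The plan is to prove both directions of the classification, concentrating all the work in the converse; the conceptual backbone is that two contact structures are isotopic if and only if they admit a common supporting open book, since the contact structure compatible with a fixed open book is unique up to isotopy. The forward implication is essentially formal: if $\xi_1$ is isotopic to $\xi_2$, the time-one map of the ambient isotopy is a contactomorphism carrying $\xi_1$ to $\xi_2$ that is smoothly isotopic to the identity, hence preserves every topological knot type, transversality, and the self-linking number, and so induces a bijection $\mathcal{T}_n^{\xi_1}(K)\to\mathcal{T}_n^{\xi_2}(K)$ for every $K$ and every $n$. Thus all the counts agree, and the real content is the reverse direction: equal count functions force isotopy.

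For the converse I would first record the enabling lemma that every contact structure on $M$ is supported by an open book whose binding is a connected fibered knot with pseudo-Anosov monodromy, i.e. an element of $\mathcal{C}_{\mathrm{fpa}}$. Beginning from a supporting open book (Giroux), one makes the binding connected by plumbing and then arranges the monodromy to be pseudo-Anosov by suitable positive stabilizations; the pseudo-Anosov condition guarantees that the complement of the binding is atoroidal, so the open book has zero Giroux torsion and the complement contains no incompressible tori. This is the step I expect to be the main obstacle, since realizing a prescribed supported contact structure together with connectivity and a pseudo-Anosov monodromy requires genuine care.

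Granting the lemma, the key point is that the single integer $|\mathcal{T}_{-\chi(\Sigma)}^{\xi}(L,\Sigma)|$ already detects whether $\xi$ is supported by a fibered pseudo-Anosov knot $(L,\Sigma)$, and does so differently according to whether $\xi$ is tight. Because the complement of $L$ is atoroidal, option (2) of Theorem~\ref{thm:G-compat} is vacuous, so any transverse representative of $L$ with $sl=-\chi(\Sigma)$ and tight complement already forces $\xi$ to be supported by $(L,\Sigma)$; combined with Corollary~\ref{paclass} and the uniqueness statement of Theorem~\ref{knotthm}, this gives for tight $\xi$ that the count equals $1$ when $\xi$ is supported by $(L,\Sigma)$ and $0$ otherwise. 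For overtwisted $\xi$ the same atoroidal input shows that if $\xi$ is not supported by $(L,\Sigma)$ then every transverse representative at $sl=-\chi(\Sigma)$ is loose, and the h-principle for loose transverse knots bounds the number of such representatives, whereas if $\xi$ is supported by $(L,\Sigma)$ the construction referenced in the remark following Theorem~\ref{knotthm} produces infinitely many. Hence for overtwisted $\xi$ the count is infinite exactly when $\xi$ is supported by $(L,\Sigma)$.

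Finally I would assemble these into the classification. Since $\xi_1$ and $\xi_2$ have identical count functions they are simultaneously tight or overtwisted: indeed $\xi$ is tight if and only if $|\mathcal{T}_n^{\xi}(L,\Sigma)|=0$ for every $(L,\Sigma)\in\mathcal{C}_{\mathrm{fpa}}$ and every $n>-\chi(\Sigma)$, the forward direction being the Bennequin inequality and the reverse using that an overtwisted structure admits loose transverse representatives of arbitrarily large self-linking number in every knot type. Now choose, via the lemma, a fibered pseudo-Anosov knot $(L_1,\Sigma_1)$ supporting $\xi_1$. In the tight case $|\mathcal{T}_{-\chi(\Sigma_1)}^{\xi_1}(L_1,\Sigma_1)|=1$, so the same holds for $\xi_2$, and by the detection above $\xi_2$ is supported by $(L_1,\Sigma_1)$; in the overtwisted case the corresponding count is infinite for both, with the same conclusion. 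Either way $\xi_1$ and $\xi_2$ are each supported by $(L_1,\Sigma_1)$, so by uniqueness up to isotopy of the compatible contact structure they are both isotopic to $\xi_{(L_1,\Sigma_1)}$, and therefore to one another.
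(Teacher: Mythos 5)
Your overall architecture (forward direction is formal; invoke Colin--Honda to get a supporting open book with binding in $\mathcal{C}_{\text{fpa}}$; separate tight from overtwisted by the count function; handle the tight case by showing the count at $sl=-\chi(\Sigma)$ is $1$ exactly when the binding supports $\xi$) matches the paper, and your tight case is correct. But there is a genuine gap in the overtwisted case, and it stems from the claim that pseudo-Anosov monodromy makes option (2) of Theorem~\ref{thm:G-compat} vacuous. Pseudo-Anosov monodromy only guarantees that every incompressible torus in the complement of $L$ is boundary-parallel; boundary-parallel tori are still incompressible in the complement, and adding Giroux torsion along them is exactly performing full Lutz twists along $L$. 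For overtwisted $\xi$ this alternative is very much alive: such a $\xi$ is \emph{not} supported by $(L,\Sigma)$, yet it admits a transverse representative of $L$ with $sl=-\chi(\Sigma)$ whose complement is tight. This breaks your claimed dichotomy that for overtwisted $\xi$ the count is infinite (or even $\geq 2$) exactly when $\xi$ is supported by $(L,\Sigma)$, and hence breaks the final step where you conclude that $\xi_2$ is supported by $(L_1,\Sigma_1)$. Your appeal to an ``h-principle for loose transverse knots'' to bound the count in the non-supported case fails for the same reason: not every representative at maximal self-linking number is loose in the Lutz-twisted structures.

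The paper's resolution, which you are missing, is to stop trying to detect ``supported'' in the overtwisted case and instead detect ``isotopic.'' One shows the count at $sl=-\chi(B)$ is $\geq 2$ when $B$ supports $\xi_1$ (one representative with tight complement, one with overtwisted complement), and that if \emph{all} representatives in $\xi_2$ had overtwisted complement the count would be $1$ (this uses the paper's final lemma, a relative Pontryagin--Thom argument plus Eliashberg's classification, not a general h-principle). Hence $\xi_2$ has a representative with tight complement, and Theorem~\ref{thm:G-compat} then says $\xi_2$ is either $\xi_{(L_1,\Sigma_1)}=\xi_1$ or is obtained from it by full Lutz twists along $B$. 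The Lutz-twist case is then dispatched not by exclusion but by observing that full Lutz twists preserve the homotopy class of the plane field, so Eliashberg's classification of overtwisted contact structures forces $\xi_2$ to be isotopic to $\xi_1$ anyway. Also note the paper only needs the count to be $\geq 2$, not infinite, so it never relies on the ``infinitely many non-isotopic maximal representatives'' statement, which is only announced as forthcoming work.
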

This theorem makes precise the notion that the knot theory a contact structure supports determines the contact structure. One can make similar definitions for a collection of knots to Legendrian classifying contact structures, but it is unclear if a similar theorem holds in this case. It is also an interesting open question to find smaller collections of knots that classify contact structures on certain manifolds. Once can also consider collections of knots that only classify tight contact structures. For example one may easily verify that if $\mathcal{C}$ consists of three linear knots in $T^3$ that span the first homology of $T^3$ then $\mathcal{C}$ Legendrian classifies tight contact structures up to isotopy.

{\em Acknowledgments:} The authors thank Francis Bonahon for a useful conversation and Matthew Hedden for several stimulating conversations that inspired the work in this paper. We also thank Joan Birman and Keiko Kawamuro for several valuable e-mail exchanges. Lastly we thank Bulent Tosun for several enlightening conversations which lead to Theorem~\ref{classifies} above. JE was partially supported by NSF grants DMS-0804820, DMS-0707509 and DMS-0244663. 

%%%%%%%%%%%%%%%%%%%%%%%%%%%%%%%%%%%%%%%%%%%%%%%%%%%%%%%%
\section{Preliminary notions and notations.}\label{sec:prelim}
%%%%%%%%%%%%%%%%%%%%%%%%%%%%%%%%%%%%%%%%%%%%%%%%%%%%%%%%

Throughout the paper, we will use standard results concerning transverse and Legendrian links, characteristic foliations and convex surfaces. Instead of recalling all the relevant, and by now well-known, theorems and definitions here, we refer the reader to the paper Sections~2 and 3 of \cite{EtnyreHonda01b} where all the necessary background can be found. (For a more thorough discussion also see the papers \cite{Etnyre05, Giroux91, Honda00a}.)  We now recall the less standard facts and sketch some of the proofs which are mostly minor modifications of the ones given in the above references.

\begin{lem}\label{dividing}
Let $\Sigma$ be an oriented surface whose boundary is positively transverse to $\xi$. If the characteristic foliation on
$\Sigma$ is Morse-Smale, then there is a contact vector field $v$ that is transverse to $\Sigma$ and
induces dividing curves on $\Sigma$ that are disjoint from $\partial \Sigma.$
\end{lem}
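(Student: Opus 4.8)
The plan is to prove this as a boundary version of Giroux's convexity criterion, which asserts that a closed surface whose characteristic foliation is Morse--Smale is convex. Since the interior argument is standard, the only genuinely new feature is the positively transverse boundary, so I would first pin down the behavior of the foliation $\mathcal{F}=\Sigma_\xi$ along $\partial\Sigma$ and then run the usual interior construction of a dividing multicurve while arranging that it never reaches $\partial\Sigma$.

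First I would analyze $\mathcal{F}$ near $\partial\Sigma$. Because $\partial\Sigma$ is positively transverse to $\xi$, at each boundary point the tangent line to $\partial\Sigma$ is not contained in $\xi$, so $T\Sigma\neq\xi$ there; hence $\mathcal{F}$ has no singularities on $\partial\Sigma$, and the characteristic line $T\Sigma\cap\xi$, lying in $\xi$, is transverse to $\partial\Sigma$. Passing to a standard tubular neighborhood of the transverse curve $\partial\Sigma$ (with contact form $d\phi+r^2\,d\theta$), one checks that on a collar $A=\partial\Sigma\times[0,1)$ the foliation $\mathcal{F}$ is a nonsingular product foliation by arcs transverse to $\partial\Sigma$, all crossing $\partial\Sigma$ in the same direction. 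In particular $A$ contains no singular points and no closed leaves of $\mathcal{F}$.

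Next I would carry out the interior construction. Choose a vector field $w$ directing $\mathcal{F}$, with the standard source/sink/saddle normal forms at the nondegenerate singularities (a source or a sink according to whether an elliptic point is positive or negative), together with an area form $\omega$ on $\Sigma$. Because $\mathcal{F}$ is Morse--Smale --- nondegenerate singularities, no retrograde saddle connections, and only finitely many (hyperbolic) closed orbits --- the standard argument produces a multicurve $\Gamma$ transverse to $\mathcal{F}$ and a decomposition $\Sigma\setminus\Gamma=R_+\sqcup R_-$ with $\mathcal{L}_w\omega>0$ on $R_+$, $\mathcal{L}_w\omega<0$ on $R_-$, and $w$ crossing $\Gamma$ transversally, so that $\Gamma$ divides $\mathcal{F}$ in the sense of Giroux. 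The point I must exploit is the freedom in the choice of the pair $(w,\omega)$: on the collar $A$, where $w$ is (up to scale) the inward product direction $\partial_s$, replacing $\omega$ by $e^{g}\omega$ changes the divergence by $\partial_s g$, so by choosing $g$ suitably increasing in $s$ I can force $\mathcal{L}_w\omega$ to have constant sign throughout $A$. Then the entire collar lies in a single region $R_\pm$, whence $\Gamma\cap A=\emptyset$ and $\Gamma$ is an interior multicurve disjoint from $\partial\Sigma$.

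Finally I would realize the abstractly divided foliation $(\mathcal{F},\Gamma,R_\pm)$ by an honest contact vector field: Giroux's construction yields $v$ defined near $\Sigma$, transverse to it, whose dividing set $\{p:v_p\in\xi_p\}$ is exactly $\Gamma$, and over the collar I would simply take $v$ to be the (Reeb-type) $\partial_\phi$ field of the standard transverse neighborhood, which is transverse to $\Sigma$ and lies off $\xi$ on all of $A$, gluing it to the interior field by a partition of unity on the overlap. I expect the main obstacle to be precisely this interface between the interior Giroux argument and the boundary: one must check that the sign of $\mathcal{L}_w\omega$ forced on the collar is compatible with an actual transverse contact field there (so that the collar genuinely lies in one region and contributes no dividing curve), and that the two fields patch across $A$ without creating new zeros of $\alpha(v)$. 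A secondary point is bookkeeping for any closed leaves of $\mathcal{F}$, which contribute closed interior components of $\Gamma$; these cause no difficulty here since the collar is leaf-product and singularity-free, so they cannot migrate to $\partial\Sigma$.
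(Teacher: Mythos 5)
Your argument is correct and is essentially the paper's: the paper divides the Morse--Smale foliation by taking $\Sigma_+'$ and $\Sigma_-'$ to be neighborhoods of the positive and negative skeleta, absorbing the boundary collar into $\Sigma_-'$ so that the core curves of the complementary annuli form a dividing multicurve disjoint from $\partial\Sigma$, which is precisely what your rescaling of the area form on the collar accomplishes in the equivalent divergence formulation. The only imprecision is that the sign of $\mathcal{L}_w\omega$ on the collar is not a free choice of ``$R_\pm$'' but is dictated by the dynamics --- the collar must join the attracting-side region (the paper's $\Sigma_-'$), since if a leaf runs from a negative hyperbolic point out through $\partial\Sigma$ the opposite choice would force $w$ to enter rather than exit $R_+$ across the dividing set --- but that choice is always available, so the proof goes through.
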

Many results concerning closed convex surfaces and convex surfaces with Legendrian boundary do not hold for
convex surfaces with transverse boundary. Nonetheless, it still makes sense to define a convex
surface with transverse boundary, as we do here, and several well known results have weaker analogs in
this situation. The proof of this lemma is a simple modification of the corresponding proof for
closed surfaces found in \cite{Giroux91}.  We sketch a few key points that we will need below.
\begin{proof}[Sketch of proof]
%After a $C^\infty$-small perturbation of $\Sigma$, we can assume that the characteristic foliation on $\Sigma$ is Morse-Smale. 
We define the surface $\Sigma_+'$ to be a small neighborhood of
the positive elliptic singularities and positive periodic orbits, together with a small neighborhood 
about the stable manifolds of the positive hyperbolic points. Similarly, we can define $\Sigma_-'$ 
using a small neighborhood of the negative elliptic singularities, periodic orbits, and boundary, 
together with neighborhoods of the unstable manifolds of the negative hyperbolic points. It is easy to make 
these surfaces disjoint and have their boundaries transverse to the characteristic foliation. 
The surface $\Sigma\setminus (\Sigma_+'\cup \Sigma_-')$ is a disjoint 
union of annuli. If we choose a simple closed curve in the center of each annulus, the resulting 
multicurve will divide the characteristic foliation in the sense of \cite{Giroux91}. Thus we may
construct a contact vector field transverse to $\Sigma$ with the given dividing set which is 
clearly disjoint from the boundary. 
\end{proof}

We also mention that Giroux's theorem allowing for the realization of characteristic foliations
respecting dividing curves holds in a weaker form for convex surfaces with transverse boundary. In
particular we have the following result.
\begin{thm}\label{realize}
Let $\Sigma$ be an oriented convex surface with boundary positively transverse to $\xi$ whose dividing set $\Gamma$ is
disjoint from the boundary. Let $\mathcal{F}$ be a singular foliation that agrees with the
characteristic foliation of $\Sigma$ on the components of $\Sigma\setminus \Gamma$ containing $\partial \Sigma$ and is divided by $\Gamma$.  There is a smooth isotopy $\Sigma_t,$ fixed near the boundary of $\Sigma,$ through convex surfaces with transverse boundary, such that the characteristic foliation on $\Sigma_1$ is $\mathcal{F}$.\qed
\end{thm}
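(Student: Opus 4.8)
The plan is to follow Giroux's proof of the realization (flexibility) theorem for closed convex surfaces (see \cite{Giroux91}), carrying the argument out relative to a collar of the transverse boundary. First I would use convexity to pass to an $I$-invariant neighborhood: there is a contact vector field $v$ transverse to $\Sigma$ (produced exactly as in Lemma~\ref{dividing}) whose flow identifies a neighborhood of $\Sigma$ with $\Sigma\times(-\epsilon,\epsilon)$ so that $v=\partial_t$ and the contact form becomes $t$-invariant, $\alpha = u\,dt + \beta$, with $u\in C^\infty(\Sigma)$, $\beta\in\Omega^1(\Sigma)$, and $\Gamma=\{u=0\}$. In this model any graph $\Sigma_f=\{(x,f(x)):x\in\Sigma\}$ with $|f|<\epsilon$ is again convex, has the \emph{same} dividing set $\Gamma=\{u=0\}$ (this set is cut out by $u$ alone and is insensitive to $f$), and its characteristic foliation is directed by the pulled-back form $u\,df+\beta$. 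The whole problem therefore reduces to producing a function $f$ on $\Sigma$ with $\ker(u\,df+\beta)=\mathcal{F}$ and with $f\equiv 0$ near $\partial\Sigma$; the isotopy $\Sigma_t=\Sigma_{tf}$ then does the job and is automatically fixed near the boundary, so positive transversality of $\partial\Sigma$ to $\xi$ is preserved throughout.

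The core of Giroux's argument is that the realization is essentially local in the components of $\Sigma\setminus\Gamma$: near $\Gamma$ every foliation divided by $\Gamma$ has the same qualitative normal form (transverse to $\Gamma$ and crossing it in the co-orientation fixed by the sign of $u$), so after a preliminary isotopy supported in a collar of $\Gamma$ I may assume that $\mathcal{F}$ and the characteristic foliation $\ker\beta$ of $\Sigma$ literally coincide near $\Gamma$. On each component $R$ of $\Sigma\setminus\Gamma$ the two foliations then agree near $\partial R$ and are both divided, and Giroux's interpolation and Moser-type argument produces a function $f$, compactly supported in the interior of $R$, realizing $\mathcal{F}$ there as $\ker(u\,df+\beta)$. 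By hypothesis $\mathcal{F}$ already equals the characteristic foliation on the components $R$ containing $\partial\Sigma$, so on those components I simply take $f\equiv 0$; on the remaining interior components I run the realization. Patching these (all with $f\equiv 0$ near $\Gamma$) gives a globally smooth $f$ that vanishes on a neighborhood of $\partial\Sigma$, as required.

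The main obstacle is not the closed-surface realization itself, which is \cite{Giroux91}, but checking that it can be made strictly relative to the boundary. Concretely one must verify two things: that the $I$-invariant neighborhood and the normal form $\alpha=u\,dt+\beta$ persist up to the transverse boundary (so that the collar of $\partial\Sigma$ really sits inside the model and is untouched by graphs supported in the interior), and that Giroux's realizing function can be chosen compactly supported away from $\Gamma$, so that the values on the boundary-parallel components ($f\equiv 0$) and on the interior components glue smoothly across the dividing curves while keeping every intermediate surface convex with the fixed dividing set $\Gamma$. The first point is where the transverse-boundary hypothesis does real work: because $\Gamma$ is disjoint from $\partial\Sigma$, the collar lies entirely in one of $\Sigma_\pm$, where $u$ has constant sign, and this is precisely what lets us leave it fixed while still controlling the foliation on the adjacent interior component. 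Once these points are settled the statement follows, and the fact that we never alter a boundary collar is exactly why $\mathcal{F}$ must be prescribed to agree with the characteristic foliation on the boundary-parallel pieces.
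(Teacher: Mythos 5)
Your proposal is correct and takes essentially the same route the paper intends: the paper gives no argument beyond asserting that the theorem is ``an obvious generalization'' of Giroux's realization theorem for closed surfaces \cite{Giroux91}, and your writeup is a faithful elaboration of exactly that relative version (graphs $\Sigma_f$ in the $I$-invariant neighborhood $\alpha=u\,dt+\beta$, with $f\equiv 0$ on the boundary-parallel components where $\mathcal{F}$ is required to agree with the characteristic foliation). In particular you correctly identify why the hypothesis on the components containing $\partial\Sigma$ is needed, which is the only point where the transverse-boundary setting differs from the closed case.
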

Here again, the proof of this theorem is an obvious generalization of the one given in \cite{Giroux91} for
closed surfaces.

\begin{lem}\label{sl-formula}
Suppose $\Sigma$ is an oriented convex surface with boundary positively transverse to $\xi$ whose dividing curves
$\Gamma$ are disjoint from the boundary. Then
\[sl(\partial \Sigma)=-\bigl(\chi(\Sigma_+)-\chi(\Sigma_-)\bigr),\]
where $\Sigma_\pm$ is the union of components of $\Sigma\setminus \Gamma$ with $\pm$-divergence.
\end{lem}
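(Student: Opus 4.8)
The plan is to reduce the computation of $sl(\partial\Sigma)$ to a count of singularities of the characteristic foliation and then identify that count with the Euler characteristics $\chi(\Sigma_\pm)$ by a Poincar\'e--Hopf argument adapted to the transverse boundary. First I would invoke Theorem~\ref{realize} (together with a standard $C^\infty$-small perturbation) to arrange that the characteristic foliation $\mathcal{F}$ on $\Sigma$ is Morse--Smale, has only nondegenerate elliptic and hyperbolic singularities and no closed leaves, is transverse to $\Gamma$ with its flow crossing $\Gamma$ from $\Sigma_+$ into $\Sigma_-$, and is divided by $\Gamma$ (so all positive singularities lie in $\Sigma_+$ and all negative ones in $\Sigma_-$). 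Since $\partial\Sigma$ is positively transverse to $\xi$, no point of $\partial\Sigma$ is a tangency, so $\partial\Sigma$ is disjoint from the singular set and $\mathcal{F}$ meets each component of $\partial\Sigma$ transversally with a consistent (inward or outward) direction.

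The argument then rests on two ingredients. Let $e_\pm$ and $h_\pm$ denote the numbers of positive/negative elliptic and hyperbolic singular points of $\mathcal{F}$. The first ingredient is the standard self-linking formula for a transverse link bounding a Seifert surface, which applies verbatim here because $\partial\Sigma$ is precisely a positively transverse realization of the link: $sl(\partial\Sigma) = (e_- - h_-) - (e_+ - h_+)$. The second ingredient is the identification $\chi(\Sigma_\pm) = e_\pm - h_\pm$. To see the latter, let $w$ be a vector field directing $\mathcal{F}$; its restriction to $\Sigma_\pm$ has interior zeros exactly at the $\pm$ singularities, of index $+1$ at elliptic and $-1$ at hyperbolic points, and it is transverse to $\partial\Sigma_\pm$. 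Each boundary component of $\Sigma_\pm$ (whether a piece of $\Gamma$ or of $\partial\Sigma$) is a circle on which $w$ is nowhere zero and points consistently inward or outward, so writing $\partial_-\Sigma_\pm$ for the inward part we have $\chi(\partial_-\Sigma_\pm)=0$. The Poincar\'e--Hopf theorem for manifolds with boundary, $\sum\mathrm{ind}(w) = \chi(\Sigma_\pm) - \chi(\partial_-\Sigma_\pm)$, then yields $\chi(\Sigma_\pm) = e_\pm - h_\pm$.

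Combining the two ingredients gives $sl(\partial\Sigma) = \chi(\Sigma_-) - \chi(\Sigma_+) = -\bigl(\chi(\Sigma_+)-\chi(\Sigma_-)\bigr)$, as claimed; a sanity check on the standard transverse unknot (one positive elliptic point, hence $\chi(\Sigma_+)=1$ and $\chi(\Sigma_-)=0$) returns $sl=-1$. I expect the main obstacle to be the careful bookkeeping at the transverse boundary: one must confirm that $\mathcal{F}$ is genuinely transverse to $\partial\Sigma$ with a single direction on each component (so that the Poincar\'e--Hopf correction term $\chi(\partial_-\Sigma_\pm)$ vanishes rather than contributing), and one must keep the orientation and sign conventions for positive versus negative singularities consistent between the self-linking formula and the divided foliation. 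A secondary point to verify is that the realization and perturbation can indeed be carried out rel $\partial\Sigma$ without creating closed leaves that would spoil the clean Poincar\'e--Hopf count.
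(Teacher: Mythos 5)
Your proof is correct and follows essentially the same route as the paper: both rest on Bennequin's singularity-count formula $sl(\partial\Sigma)=-\bigl((e_+-h_+)-(e_--h_-)\bigr)$ together with the identification $\chi(\Sigma_\pm)=e_\pm-h_\pm$. The only difference is cosmetic: you justify the latter identity via Poincar\'e--Hopf for a vector field transverse to $\partial\Sigma_\pm$ (using that the inward-pointing part of the boundary is a union of circles and so has vanishing Euler characteristic), whereas the paper deformation retracts $\Sigma_+$ onto a neighborhood of the positive elliptic points, repelling periodic orbits, and unstable manifolds of the positive hyperbolic points and reads off the Euler characteristic from that decomposition.
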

\begin{proof}
We can assume that the characteristic foliation on $\Sigma$ is Morse-Smale. Recall from
\cite{Bennequin83} that
\begin{equation}\label{slformula}
sl(\partial\Sigma)=-\bigl((e_+-h_+)-(e_--h_-)\bigr),
\end{equation}
where $e_\pm$ is the number of positive/negative elliptic points in the characteristic foliation of
$\Sigma$, and $h_\pm$ is the number of positive/negative hyperbolic points. Further recall that $\Sigma_+$ is
homotopic to the surface obtained by taking the union of a small neighborhood of the positive elliptic points and repelling
periodic orbits, and a small neighborhood of the unstable manifolds
of the positive hyperbolic points. One may now easily check that $\chi(\Sigma_+)=e_+-h_+$ and, similarly, $\chi(\Sigma_-)=e_--h_-.$ 
\end{proof}

\begin{lem}[Torisu 2000, \cite{Torisu00}]\label{support}
The contact structure $\xi$ on $M$ is supported by the open book decomposition $(B,\pi)$ if and only
if for every two pages of the open book that form a smooth surface $\Sigma'$, the contact structure
can be isotoped so that $\Sigma'$ is convex with dividing set $B\subset \Sigma'$ and $\xi$ is tight
when restricted to the components of $M\setminus \Sigma'.$
\end{lem}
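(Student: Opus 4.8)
The plan is to analyze the decomposition of $M$ cut out by $\Sigma'$. Fix two opposite pages $P_0=\pi^{-1}(0)$ and $P_\pi=\pi^{-1}(\pi)$, so that $\Sigma'=P_0\cup_B P_\pi$ is a closed surface splitting $M$ into $H_+=\pi^{-1}([0,\pi])$ and $H_-=\pi^{-1}([\pi,2\pi])$, each diffeomorphic to a thickened page $P\times[0,1]$ with the binding $B$ in the common boundary. Both implications are read off this picture, with $B$ playing the role of the dividing set.

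For the forward direction, suppose $\xi$ is supported, so there is a contact form $\alpha$ whose Reeb field $R$ is positively transverse to the interiors of the pages and positively tangent to $B$. Each page is then a convex surface with transverse boundary realizing $sl(B)=-\chi(P)$; by Lemma~\ref{sl-formula} this forces the interior dividing set of a page to be empty, i.e. the whole page has positive divergence. Gluing $P_0$ to $P_\pi$, the transverse contact vector field points to opposite sides of $\Sigma'$ along the two pages, since $R$ enters $H_+$ across $P_0$ and exits across $P_\pi$; hence $\Sigma'$ is convex with dividing set exactly $B$. Near $B$ one uses the Giroux normal form $\alpha=dz+r^2\,d\phi$ of \cite{Giroux91} to verify both the transversality of the contact vector field to $\Sigma'$ and that $\Gamma=B$. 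Finally each $H_\pm$ carries the product structure $\ker(dt+\lambda)$ coming from a Liouville form $\lambda$ on the page (equivalently, $d\alpha$ restricts to an area form on every page), and such a thickened-page region is tight, giving tightness of $\xi|_{M\setminus\Sigma'}$.

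For the converse I would reconstruct a compatible Reeb field from the convex data. Having isotoped $\xi$ so that $\Sigma'$ is convex with $\Gamma=B$, a collar of $\Sigma'$ supplies a contact vector field transverse to the two pages; using Theorem~\ref{realize} I can normalize the characteristic foliation near $\Sigma'$ to the model coming from a supported structure. The complementary pieces $H_\pm\cong P\times[0,1]$ are tight by hypothesis and have convex boundary with dividing set $B$, so by the uniqueness of tight contact structures on such product pieces (Eliashberg's uniqueness on the ball \cite{Eliashberg92a} together with Honda's gluing and classification \cite{Honda00a}) each $H_\pm$ is contactomorphic, rel this boundary data, to the standard half-open-book model. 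Assembling these models produces a Reeb field transverse to $P_0$ and $P_\pi$ and tangent to $B$. To upgrade this to transversality to all pages at once, I would run the same argument over the family $\Sigma'_\theta=P_\theta\cup P_{\theta+\pi}$ --- which is precisely why the hypothesis is imposed for every pair of pages --- and interpolate to obtain a single contact form adapted to the full fibration $\pi$, which is the definition of $\xi$ being supported by $(B,\pi)$.

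The main obstacle is the converse: converting the static convexity-and-tightness data into a Reeb field transverse to every page simultaneously. The delicate points are the uniqueness statement for the tight pieces $H_\pm$, so that their contact structures are genuinely the product models and not merely abstractly tight, and the interpolation across the circle's worth of decompositions so that the reconstructed Reeb field is positively transverse to all pages at once; controlling this globally, rather than for a single pair, is where the real work lies.
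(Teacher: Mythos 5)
Your forward direction is fine and matches what the paper leaves to the reader as routine. The trouble is in the converse, where you have both a gap and a misplaced emphasis. The step you flag as ``where the real work lies'' --- interpolating the reconstructed Reeb fields over the circle's worth of decompositions $\Sigma'_\theta$ to get transversality to every page simultaneously --- is never carried out, and in fact it is unnecessary: being supported by $(B,\pi)$ is an isotopy-invariant condition, so once you know each $H_\pm$ is contactomorphic rel boundary to the standard half-open-book piece, gluing the two models shows $\xi$ is isotopic to $\xi_{(B,\pi)}$ and you are done. The genuine content, which you assert rather than prove, is precisely that uniqueness statement for the tight pieces: uniqueness of tight contact structures on a genus-$g$ handlebody with prescribed convex boundary is \emph{false} in general and depends delicately on the dividing set, so citing Eliashberg's uniqueness on the ball together with ``Honda's gluing and classification'' does not by itself suffice.

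The paper's proof supplies exactly the missing argument and organizes the whole converse around it. It exhibits $\Sigma'$ as a Heegaard surface, cuts one handlebody $\Sigma\times[-\epsilon,\epsilon]$ along product disks $D_i$ over curves cutting the page into a disk, and cuts the other along the monodromy-twisted disks $D_i'$; because $\Gamma_{\Sigma'}=B$, each curve $\partial D_i$ and $\partial D_i'$ meets the dividing set exactly twice, so these curves can be Legendrian realized, the disks made convex, and each convex disk then carries a unique dividing arc. Since the contact structure near $\Sigma'\cup(\cup D_i)\cup(\cup D_i')$ is determined by this data and the complementary pieces are balls carrying a unique tight structure, there is only one tight contact structure satisfying the hypotheses; since the supported one visibly does, the converse follows with no Reeb field reconstruction and no interpolation. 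If you replace your appeal to handlebody uniqueness by this disk-decomposition argument and drop the final interpolation step, your proof closes up and essentially becomes the paper's.
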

\begin{proof}[Sketch of Proof]
It is well known that $\Sigma'$ is a Heegaard surface for $M.$  This can be seen by noticing that if
$P=\Sigma\times[-\epsilon,\epsilon]$ is one of the components of $M\setminus \Sigma'$, and 
$c_1,\ldots, c_k$ is a collection of curves that cut $\Sigma$ into a disk, then $D_1, \ldots, D_k$
is a collection of disks in $P$ that cut $P$ into a ball, where $D_i=c_i\times[-\epsilon,\epsilon].$
In addition, $S_i=(\partial D_i\setminus c_i\times\{-\epsilon\})\cup (\phi(c_i)\times\{-\epsilon\})$
bounds a disk $D'_i$ in $M\setminus P,$ where $\phi$ is the monodromy of the fibration of $M\setminus B.$ 
All the disks $D'_i$ are disjoint and they cut $M\setminus P$ into
a ball.  The two components of $M\setminus \Sigma'$ are thus handlebodies.

One easily checks that under the hypothesis of the lemma, the simple closed curves $\partial D_i$ and
$\partial D_i'$ each intersect the dividing set of $\Sigma'$ twice. Thus, we may Legendrian realize
these curves and make the disks convex. Moreover, as each disk has a single dividing curve 
there is a unique configuration for the dividing
set on these disks. Since contact structure is determined in a neighborhood of $\Sigma'\cup(\cup D_i)\cup (\cup D_i')$ by the dividing set, the handlebodies cut along these disks are 3-balls, and we are assuming the
contact structure on the handlebodies is tight, the contact structure on the handlebodies is unique.
There can thus be only one tight contact structure satisfying the hypothesis of the theorem.  One easily checks that the contact structure supported by the open book decomposition satisfies the hypothesis.
\end{proof}

\begin{lem}[Honda, Kazez and Matic 2005, \cite{HondaKazezMatic07}]\label{moveleft}
Suppose $\Sigma$ is a convex surface containing a disk $D$ such that $D\cap \Gamma_\Sigma$ is as
shown in Figure~\ref{fig:moveright}. Also suppose $\delta$ and $\delta'$ are as shown in the figure.
If there is a bypass for $\Sigma$ attached along $\delta$ from the front, then there is a bypass for
$\Sigma$ attached along $\delta'$ from the front.
\end{lem}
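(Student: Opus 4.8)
The plan is to reduce the statement to Honda's bypass-attachment calculus together with the transverse-boundary Giroux realization recorded above as Theorem~\ref{realize}. Recall that a bypass along an arc $\delta$ is a convex half-disk whose Legendrian boundary meets $\Sigma$ exactly along $\delta$, with $\delta$ intersecting $\Gamma_\Sigma$ in three points, and that attaching such a bypass from the front produces a new convex surface, isotopic to $\Sigma$ through convex surfaces, whose dividing set is obtained from $\Gamma_\Sigma$ by the standard local modification supported in a neighborhood of $\delta$ (the ``$180^\circ$ rotation'' of the three intersection arcs). Since every bypass whose arc of attachment lies in the interior disk $D$ can be pushed into a vertically invariant neighborhood $D\times[0,1]$ with $\Sigma=D\times\{0\}$, the entire assertion is local in $D\times[0,1]$, so I will work there throughout and read off all dividing-set changes from Figure~\ref{fig:moveright}.

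My first step is to attach the hypothesized bypass along $\delta$ and write down the resulting dividing set $\Gamma_{\Sigma_1}$ on the pushed surface $\Sigma_1$, using the standard attachment rule applied to $D\cap\Gamma_\Sigma$. The point of the configuration in the figure is that, after this modification, the arc $\delta'$ meets $\Gamma_{\Sigma_1}$ in exactly three points and its middle intersection is with a dividing arc that has become $\partial$-parallel, i.e.\ cuts off from $D$ a sub-disk $D_0$ with $D_0\cap\Gamma_{\Sigma_1}$ a single arc. The second step is then to invoke the standard fact that a $\partial$-parallel dividing arc supports a bypass: using Theorem~\ref{realize} I realize a Legendrian arc isotopic to $\delta'$, make $D_0$ convex with the prescribed dividing arc, and extract the bypass half-disk whose arc of attachment is $\delta'$. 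This produces a genuine bypass for $\Sigma_1$ along $\delta'$.

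The third step is to transport this bypass back across the bypass layer to $\Sigma$ itself. Because $\Sigma_1$ and $\Sigma$ cobound the invariant layer in $D\times[0,1]$ and agree outside a neighborhood of $\delta$, and because the bypass just constructed can be made disjoint from the support of the first attachment after a small push-off, I can carry it through the inverse isotopy to a bypass for $\Sigma$ along $\delta'$. Here one must track the side on which each half-disk sits so that the composition of ``push $\Sigma$ forward across the $\delta$-bypass'' with ``push back the $\delta'$-bypass'' delivers a bypass attached to $\Sigma$ from the \emph{front}, as the conclusion demands.

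The main obstacle I expect is bookkeeping rather than anything conceptual. Two points require care: first, verifying that the local modification really places $\delta'$ in a $\partial$-parallel configuration, which is exactly where the precise positions of $\delta$, $\delta'$, and $D\cap\Gamma_\Sigma$ in Figure~\ref{fig:moveright} are used and is the only place the argument could fail if the picture were drawn otherwise; and second, checking the orientation and side conventions so that ``front'' is preserved under the attach-then-reverse procedure. Both are routine once the figure is fixed, so the proof genuinely depends on the specific configuration rather than on any general bypass-transfer principle.
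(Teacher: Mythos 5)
The paper does not actually prove this lemma; it is quoted from \cite{HondaKazezMatic07}, so there is no internal proof to compare against. Judged on its own terms, your argument has a genuine gap at the third step, and it sits exactly where the real content of the lemma lies. After you attach the hypothesized bypass along $\delta$ and push $\Sigma$ to $\Sigma_1$, the arc $\delta'$ meets $\Gamma_{\Sigma_1}$ in the $\partial$-parallel configuration you describe; but a bypass in that configuration is a \emph{trivial} bypass for $\Sigma_1$, and it sits on the front of $\Sigma_1$, i.e.\ even farther from $\Sigma$ than the first bypass layer. You then assert that this half-disk ``can be made disjoint from the support of the first attachment after a small push-off'' and carried back through the inverse isotopy. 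That cannot work: in Figure~\ref{fig:moveright} the arc $\delta'$ shares two of its three intersections with the configuration of $\delta$ and passes directly through the region where $\Sigma$ and $\Sigma_1$ differ, so the $\delta'$-half-disk cannot be pushed off the support of the first attachment. Moreover, the inverse of ``push across a bypass'' is not a contact isotopy of the ambient structure restricted to a region avoiding your half-disk, and the arc $\delta'$ has \emph{different} intersection patterns with $\Gamma_\Sigma$ and with $\Gamma_{\Sigma_1}$; being a bypass arc for one surface does not make it one for the other. Indeed, the bypass you build for $\Sigma_1$ is trivial (it does not change the dividing set), while the desired bypass for $\Sigma$ along $\delta'$ changes $\Gamma_\Sigma$ nontrivially, so the ``transport back'' step is being asked to manufacture a nontrivial dividing-set change out of a trivial one; the change has to be extracted from the $\delta$-bypass layer in between, and that extraction is precisely what is not justified.

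The correct argument (as in \cite{HondaKazezMatic07}) locates the $\delta'$-bypass half-disk for $\Sigma$ \emph{inside} the union of an invariant neighborhood of $\Sigma$ and the $\delta$-bypass layer itself, either by an explicit model computation in that layer or by a careful use of the existence of trivial bypasses from \cite{Honda00a}; it does not construct a bypass above $\Sigma_1$ and move it down. Your first two steps (localizing to $D\times[0,1]$ and computing the effect of the $\delta$-attachment on the dividing set) are fine and are part of any proof, and your closing caution about where the figure's specific configuration enters is well placed; but as written the proof does not close, because the descent from $\Sigma_1$ to $\Sigma$ is asserted rather than proved, and the assertion is false as stated. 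A smaller point: in your second step the relevant realization tool is the Legendrian realization principle on the closed convex surface (or disk) in question, not the transverse-boundary statement of Theorem~\ref{realize}, which plays no role here.
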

\begin{figure}[ht]
  \relabelbox \small {\epsfysize=1.1truein\centerline {\epsfbox{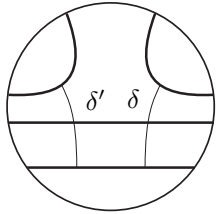}}} \relabel
        {d}{$\delta$} \relabel {2}{$\delta'$} \endrelabelbox
        \caption{If there is a bypass for $\delta$ then there is one for $\delta'$ too.}
        \label{fig:moveright}
\end{figure}

We briefly recall that for a torus $T$, embedded in a contact manifold $(M,\xi)$, the {\em
Giroux torsion} of $T$ is the largest integer $n$ such that there is a contact embedding of $T^2\times[0,1]$
with the contact structure
\[
\xi_n=\ker(\sin(2\pi nz)\, dx + \cos(2\pi nz)\, dy)
\]
with $T^2\times\{pt\}$ isotopic to $T.$  The {\em Giroux torsion} of $\xi$ is the maximum of the Giroux torsion over all 
embedded tori in $M.$  (It need not be finite.)

We end this preliminary section with a simple lemma concerning curves on a surface. 
\begin{lem}\label{det-curves}
Let $\Gamma$ and $\Gamma'$ be two properly embedded multi-curves without homotopically trivial components on a
surface $\Sigma\not=S^2$. If $\gamma\cdot \Gamma\leq \gamma\cdot \Gamma'$ for all simple
closed curves and properly embedded arcs $\gamma$ in $\Sigma$ then, after isotopy, $\Gamma\subset \Gamma'.$
(Here $\cdot$ refers to geometric intersection number.)
\end{lem}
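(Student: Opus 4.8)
The plan is to first reduce to the case in which $\Gamma$ and $\Gamma'$ are disjoint, and then to compare them one isotopy class of component at a time, using for each class a suitably chosen ``dual'' test curve or arc that reads off the multiplicity of that class. For the reduction, note that for any component $c'$ of $\Gamma'$ (a simple closed curve or a properly embedded arc) we have $c'\cdot\Gamma'=0$, since the components of the multicurve $\Gamma'$ are disjoint. Taking $\gamma=c'$ in the hypothesis gives $c'\cdot\Gamma\le c'\cdot\Gamma'=0$, hence $c'\cdot\Gamma=0$. As $\Gamma$ has no homotopically trivial components, $c'\cdot\Gamma=0$ forces every component of $\Gamma$ to be isotopic off $c'$; since the $c'$ are themselves pairwise disjoint, I would then isotope $\Gamma$ to be disjoint from all of $\Gamma'$ simultaneously. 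From here on assume $\Gamma\cap\Gamma'=\emptyset$.

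Next I would fix an isotopy class $a$ of an essential simple closed curve (arc classes are handled in the same way) and let $n_a$ and $n_a'$ be the number of components of $\Gamma$ and of $\Gamma'$, respectively, lying in class $a$. The key is to produce a test object $\delta$ with $\delta\cdot c=0$ for every component $c$ of $\Gamma\cup\Gamma'$ not in class $a$, while $\delta\cdot a\ge 1$. Given such a $\delta$, since the class-$a$ copies are all disjoint parallel curves we get $\delta\cdot\Gamma=n_a(\delta\cdot a)$ and $\delta\cdot\Gamma'=n_a'(\delta\cdot a)$, and the hypothesis $\delta\cdot\Gamma\le\delta\cdot\Gamma'$ immediately yields $n_a\le n_a'$. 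Applying this to every class (with $n_a=0$ when $a$ does not occur in $\Gamma$) gives $n_a\le n_a'$ for all $a$, and after the obvious isotopy gathering parallel copies this is exactly the statement $\Gamma\subset\Gamma'$.

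To construct $\delta$, I would cut $\Sigma$ along all components of $\Gamma\cup\Gamma'$ that are \emph{not} in class $a$. Because those curves are disjoint and essential, each component of the resulting surface $Y$ is incompressible in $\Sigma$, and the class-$a$ curves persist as parallel copies of a single essential curve $\bar a$ lying in one component $Y_0$ of $Y$. If $\bar a$ is not boundary-parallel in $Y_0$, standard surface topology furnishes a simple closed curve in the interior of $Y_0$ that meets $\bar a$ essentially; this curve is disjoint from the cut locus and serves as $\delta$. If instead $\bar a$ is boundary-parallel in $Y_0$, it cannot be parallel to a boundary coming from $\Gamma\cup\Gamma'$ (that would place $a$ in another class), so it must be parallel to a component of $\partial\Sigma$, and I would take $\delta$ to be a properly embedded arc of $\Sigma$ running transversally across the resulting nested annular family and out to a point of $\partial\Sigma$, crossing each class-$a$ copy exactly once and disjoint from the other classes.

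I expect the main obstacle to be precisely this construction of $\delta$: one must check, across the low-complexity and boundary-parallel configurations (and for arc components of $\Gamma,\Gamma'$), that $\delta$ can always be realized as a genuine simple closed curve or as a properly embedded arc with endpoints on $\partial\Sigma$, disjoint from every component not in class $a$ yet hitting each class-$a$ copy once. The hypotheses $\Sigma\neq S^2$ and the absence of homotopically trivial components are exactly what ensure that every essential class admits such a dual with positive intersection number, so these cases are where those assumptions enter. As a sanity check and an alternative, the entire argument can be phrased via measured laminations or geodesic currents, in which $\Gamma$ and $\Gamma'$ are integral points and the pointwise inequality of their intersection functionals forces containment directly; the hands-on argument above is essentially an unwinding of that fact.
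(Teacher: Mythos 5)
Your proposal is correct and follows essentially the same route as the paper: the reduction to the case $\Gamma\cap\Gamma'=\emptyset$ is the identical test-curve trick (a component $c'$ of $\Gamma'$ satisfies $c'\cdot\Gamma'=0$, so the hypothesis forces $c'\cdot\Gamma=0$), and your class-by-class count via dual curves is precisely the verification the paper compresses into the assertion that ``intersections with various curves and properly embedded arcs determine the multiplicities'' on a pair-of-pants decomposition extending $\Gamma\cup\Gamma'$. Your observation that boundary-parallel classes force the use of properly embedded arcs rather than closed curves as test objects is exactly the content of Remark~\ref{rem:curves}.
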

\begin{proof}
This proof, in the case where $\partial \Sigma=\emptyset$, originally appeared in \cite[Lemma 4.3]{Honda00b}
where it was attributed to W.~Kazez. We begin by representing $\Gamma$ as a set of non-isotopic
simple closed curves $\gamma_i$ with multiplicities $m_i,$ so $\Gamma$ is the union of $m_i$
disjoint copies of the $\gamma_i$. We can similarly represent $\Gamma'$ as curves $\gamma_i'$ with
multiplicities $m_i'.$ If $(\cup \gamma_i)\cdot (\cup \gamma_j')\not = \emptyset$ then there is some
$i$ and $j$ such that $\gamma_i\cdot \gamma'_j\not=\emptyset.$ In this case take $\gamma=\gamma'_j$
to contradict the hypothesis of the lemma.
Thus we are left to consider the case when the $\gamma_i$ and the $\gamma_j'$ are disjoint. In
this case, one can extend $(\cup\gamma_i)\cup (\cup\gamma'_j),$ where parallel curves are identified,  
to a set of curves that cut $\Sigma$
into a collection of pairs-of-pants (that is unless $\Sigma$ is a torus or annulus, in which case the lemma is
clear). It is now easy to verify that intersections with various curves
and properly embedded arcs determine the multiplicities on the curves associated to a pair-of-pants
decomposition.
\end{proof}

\begin{rem}\label{rem:curves}
A careful analysis of how to determine the multiplicities on the curves in a pair-of-pants
decomposition of a surface shows that if one only uses simple closed curves then the multi-curve
would be determined except for its multiplicities on the curves parallel to 
a boundary component of $\Sigma.$
\end{rem}

%%%%%%%%%%%%%%%%%%%%%%%%%%%%%%%%%%%%%%%%%%%%%%%%%%%%%%%%
\section{Fibered links and contact structures.}
%%%%%%%%%%%%%%%%%%%%%%%%%%%%%%%%%%%%%%%%%%%%%%%%%%%%%%%%
The goal of this section is to prove Theorem \ref{thm:G-compat}.  In essence, we determine all possible tight complements of a maximal self-linking number fibered transverse link $L.$  Indeed, we show they all arise from the contact structure $\xi_{(L,\Sigma)}$ supported by $L$ by adding Giroux torsion.  We begin with some conventions.

Let $L$ be a transverse, fibered link and $\Sigma$ be a convex fiber surface for $L$ with dividing set $\Gamma$ disjoint from $\partial \Sigma.$  We assume that the characteristic foliation $\mathcal{F}$ of $\Sigma$ is Morse-Smale and has no leaf connecting a negative singularity to the boundary of $\Sigma.$ % that the dividing set $\Gamma$ is disjoint from the boundary. %and contains curves isotopic to each boundary component and that $\partial \Sigma$ is contained in $\Sigma_-$.  
%We write the dividing set of $\Sigma$ as $\Gamma \cup \mathcal{C}$ where $\mathcal{C}$ is as set of components of the dividing set closest to the boundary and $\Gamma$ contains everything else.  Let $A$ be a union of annuli in $\Sigma$ that contain $\mathcal{C}$ and $\partial \Sigma$ so that $\Sigma'=\Sigma\setminus A$ is also a convex surface with transverse boundary and dividing set $\Gamma.$ 
Since $\Sigma$ is convex, there is an $I$-invariant neighborhood $\Sigma \times [0,1]$ of $\Sigma$.  After rounding corners, we can further assume this neighborhood to have a convex boundary $D\Sigma$, the double of $\Sigma$,  whose dividing 
set is given by $\Gamma \cup \bar{\Gamma} \cup \mathcal{C}$.  Here, $\Gamma$ is the dividing set of $\Sigma$ 
as it sits on $\Sigma \times \{1\},$ $\bar{\Gamma}$ is the dividing set of $\Sigma$ sitting on $\Sigma\times \{0\}$, and $\mathcal{C}$ a multi-curve isotopic to $L$ sitting in the region $(\partial\Sigma)\times [0,1]$ as the curves $(\partial \Sigma)\times\{\frac 12\}.$ One may easily see this by considering the proof of Lemma~\ref{dividing}.  (In particular, the convexity and dividing curves on $D\Sigma$ are deduced from the characteristic foliation.)
We may think of $L$ as being any transverse curve in $(\partial\Sigma)\times [0,1]$ (as they are all transversely isotopic). 
In particular, we may assume $L$ is $\mathcal{C}$, the boundary of $\Sigma\times\{1\}$, or $\Sigma\times\{0\}$ and will use whichever realization of $L$ is most convenient for our arguments without further comment.

Since $D \Sigma$ is a Heegaard surface for $M$ it divides $M$ into two handlebodies $H_1$ and $H_2.$  Each handlebody is diffeomorphic to $\Sigma \times I$, and we may choose the handlebodies such that the contact structure is $I$-invariant on $H_2$.  On the boundary of $H_2$, the dividing set on $D \Sigma$ is as above.  Thought of as on the boundary of $H_1$, the dividing set is given by $\Gamma \cup \mathcal{C} \cup \phi^{-1}(\bar{\Gamma}),$ where $\phi$ is the monodromy of the fibered link.  %Equivalently, this is a tight $\Sigma \times I$  with dividing set $\Gamma \cup \mathcal{C}$ on $\Sigma \times \{1\}$ and and $\phi(\Gamma) \cup \mathcal{C}$ on $\Sigma \times \{0\}$, having identified the two sets of components of $\mathcal{C}$ parallel to the binding.  

We point out that if there is a bypass for $D\Sigma$ with one endpoint on $\mathcal{C}$ and contained entirely in one of $D \Sigma - \Sigma \times \{1\}$ or $D \Sigma - \Sigma \times \{0\}$, then we can push $\Sigma$ across this bypass while keeping $L$ fixed.

\begin{Def}  Let $\xi$ be an oriented contact structure on a closed, oriented manifold $M$ and $(L, \Sigma)$ an open book for $M$.  We say $\xi$ and $(L, \Sigma)$ are {\em quasi-compatible} if there exists a contact vector field for $\xi$ which is everywhere positively transverse to the fibers of the fibration $(M\setminus L)\to S^1$ and positively tangent to $L$.
\end{Def}

This definition is very close to that of compatibility, save that we have eliminated the requirement that the vector field be Reeb.  It is this loosening that allows for twisting along $\Sigma$ and gives the following lemma.  We first need some conventions for \emph{adding half-twists.}  Let $T$ be an embedded, pre-Lagrangian torus in a contact manifold $(M, \xi)$.  Identify $T$ with $\R^2 / \Z^2 $ so that the the foliation on $T$ has slope 0.  To add $n$ half-twists along $T$, we first cut $M$ along $T,$ then glue in the contact manifold $(T^2 \times [0, n], \ker (\sin \pi t \, dx + \cos \pi t \, dy))$.  Note that if $T$ is a non-separating torus and $n$ is odd, then this will result in a non-orientable plane field.  Adding two half-twists is equivalent to adding a full-twist, that is, adding Giroux torsion.

\begin{lem} \label{lem:quasi-compatible}
Suppose $(M,\xi)$ is a contact manifold and $(L,\Sigma)$ is a fibered link, positively transverse to $\xi$. Assume that $\Sigma$ can be made convex with dividing set $\Gamma$ disjoint from $\partial \Sigma$ and having no component bounding a disk.  Moreover, assume that the characteristic foliation $\mathcal{F}$ of $\Sigma$ is Morse-Smale and has no leaf connecting a negative singularity to the boundary of $\Sigma.$  Let $H_1$ and $H_2$ be the two handlebody components of $M\setminus D\Sigma$, each of which is diffeomorphic to $\Sigma\times [0,1].$ % \cup \mathcal{C},$ where $\mathcal{C}$ is a copy of $L$ pushed into $\Sigma,$ and that no component of $\Gamma$ bounds a disk.  Let $A$ be the union of annuli in $\Sigma$ that contain $\mathcal{C}$ and $\partial \Sigma=L$ such that the characteristic foliation on $A$ is by arcs running from one boundary component to the other. Thus $\Sigma'=\Sigma\setminus A$ is a convex surface with transverse boundary and dividing set $\Gamma$ and $L$ and $\partial \Sigma'$ are transversely isotopic.
The following are equivalent:
\begin{itemize}
\item $(L,\Sigma)$ is quasi-compatible with $\xi$. 
\item $\xi$ restricted to $H_i$ is an $I$-invariant contact structure $i=1,2$.
\item $(M, \xi)$ cut along $\Sigma$ is an $I$-invariant contact structure.
\item There is a set of arcs $\alpha_1,\ldots, \alpha_n$ in $\Sigma$, cutting $\Sigma$ into a disk, such that the corresponding convex product disks in both $H_1$ and $H_2$ have dividing curves consisting of parallel arcs running from $\Sigma\times\{0\}$ to $\Sigma\times\{1\}.$
\end{itemize}

If $(L,\Sigma)$ is quasi-compatible with $\xi$, then the monodromy $\phi$ of $\Sigma$ satisfies $\phi(\Gamma) = \Gamma$.  Let $\gamma_1,\ldots, \gamma_k$ be disjoint non-homotopic curves on $\Sigma$ and $n_1,\ldots, n_k$ be positive integers such that $\Gamma$ is the union of $n_i$ copies of $\gamma_i, i=1,\ldots, k.$   The $k$ curves $\gamma_i$ will fall into some number of orbit classes, $\mathcal{O}_j=\{\gamma_{1_j},\ldots, \gamma_{k_j}\},$ under $\phi.$ Writing $M\setminus L$ as $\Sigma\times[0,1]/\sim_\phi,$ where $(x,1)\sim_\phi (\phi(x),0),$ the $\gamma_i\times[0,1]$ in an orbit class $\mathcal{O}_j$ give an incompressible torus $T_j$ in $M\setminus L.$  Then $\xi$ is obtained from $\xi_{(L,\Sigma)}$ by adding $n_i$ half-twists along the torus $T_i$ corresponding to $\mathcal{O}_i$.  
\end{lem}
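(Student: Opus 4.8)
The plan is to prove the four-way equivalence first, by a cycle of implications that all pass through the single notion of $I$-invariance on the cut-open mapping torus, and then to read off the structural conclusion directly from the shape of $\Gamma$ once $I$-invariance is known. The conceptual dictionary throughout is that a contact vector field $v$ positively transverse to the pages is exactly the infinitesimal generator of a product structure in which $\xi$ is translation-invariant: flowing along $v$ identifies $M\setminus L$ with the mapping torus $\Sigma\times[0,1]/\sim_\phi$ and, since the flow of a contact vector field is by contactomorphisms, makes $\xi$ invariant in the interval direction.

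For the equivalences I would argue as follows. For quasi-compatibility $\Rightarrow$ cut-open $I$-invariance, I use the flow of $v$ to produce product coordinates in which $v=\partial_t$; being a contact vector field, $\partial_t$ preserves $\xi$, which is the definition of $I$-invariance, and positive tangency to $L$ handles the binding region. Conversely, from $I$-invariance of the cut-open manifold I recover $v$ as $\partial_t$, extending it over a standard transverse neighborhood of $L$ so that it is tangent to the binding; this gives quasi-compatibility, so (a)$\Leftrightarrow$(c). The equivalence (b)$\Leftrightarrow$(c) is a concatenation statement: in the mapping-torus coordinates the handlebodies are the two halves $\Sigma\times[0,\tfrac12]$ and $\Sigma\times[\tfrac12,1]$, which inside the cut-open manifold meet along the single page $\Sigma\times\{\tfrac12\}$, and two $I$-invariant structures with matching characteristic foliation on a common page concatenate to an $I$-invariant structure (and restrict to one); since $H_2$ is already chosen $I$-invariant, (c) reduces to requiring $H_1$ be $I$-invariant, i.e. to (b). Finally (b)$\Leftrightarrow$(d) is convex-surface theory for handlebodies: an $I$-invariant structure on $\Sigma\times[0,1]$ forces the convex product disks $\alpha_j\times[0,1]$ over a cutting system $\{\alpha_j\}$ to carry only vertical dividing arcs, and conversely the dividing sets on a cutting system of disks determine the contact structure on the handlebody, so the all-vertical configuration reconstructs the $I$-invariant one. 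Here I would invoke the uniqueness of the tight filling of fixed boundary and cutting-disk dividing data, exactly as in the proof of Lemma~\ref{support}.

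For the structural conclusion, assume quasi-compatibility, so by the equivalence both $H_1$ and $H_2$ are $I$-invariant. In an $I$-invariant structure the dividing set of each page is constant, equal to $\Gamma$; comparing the two pages of $H_1$, whose dividing sets are $\Gamma$ and $\phi^{-1}(\bar\Gamma)$ as recorded before the lemma, forces $\phi(\Gamma)=\Gamma$ up to isotopy. Writing $\Gamma$ as $n_i$ parallel copies of the essential curves $\gamma_i$, the relation $\phi(\Gamma)=\Gamma$ permutes the $\gamma_i$ and preserves their multiplicities, so each $\phi$-orbit class $\mathcal{O}_j$ sweeps out a well-defined torus $T_j$ in the mapping torus; since the $\gamma_i$ are essential, $T_j$ is incompressible. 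To compare $\xi$ with $\xi_{(L,\Sigma)}$, I note that $\xi_{(L,\Sigma)}$ is itself quasi-compatible (its Reeb field is a contact vector field transverse to the pages and tangent to $L$), and that a convex page for it has empty dividing set because the Reeb field is nowhere in $\xi$. Thus both structures are $I$-invariant on the cut-open mapping torus, agree away from neighborhoods of the $T_j$, and differ near each $T_j$ only in that $\xi$ has $n_j$ extra parallel dividing curves where $\xi_{(L,\Sigma)}$ has none. Identifying a neighborhood of $T_j$ with the local half-twist model $\ker(\sin\pi t\,dx+\cos\pi t\,dy)$ and checking that one half-twist (a $\pi$ rotation of the foliation) contributes exactly one parallel dividing curve on a vertical annulus, I conclude that $\xi$ is obtained from $\xi_{(L,\Sigma)}$ by inserting $n_j$ half-twists along $T_j$; the relation $\phi(\Gamma)=\Gamma$ is exactly what makes these insertions consistent around the mapping torus.

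The main obstacle I expect is the reconstruction direction of the equivalence, namely showing that the all-vertical dividing configuration on a cutting system actually forces the contact structure to be $I$-invariant (not merely to have $I$-invariant dividing data), together with the clean gluing of the local half-twist models in the final step. The first requires the handlebody uniqueness input of Lemma~\ref{support} applied with the specific boundary and cutting-disk dividing sets, and care that the hypotheses on $\Gamma$ (no disk-bounding component) and on the foliation (no negative leaf reaching $\partial\Sigma$) are precisely what rule out stray boundary-parallel or closed dividing arcs on the product disks. The second requires pinning down the half-twist count from the model and verifying that, after isotoping the two $I$-invariant structures to agree outside neighborhoods of the $T_j$, the gluing maps match, so that the global result is genuinely $\xi_{(L,\Sigma)}$ with half-twists added rather than a structure with altered monodromy.
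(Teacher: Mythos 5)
Your overall strategy matches the paper's: the four-way equivalence is handled as a Torisu-style argument (cutting the handlebodies along product disks and using uniqueness of tight structures on balls, exactly the mechanism of Lemma~\ref{support}), and the structural conclusion rests on comparing $\xi$ to a model built by inserting half-twists along pre-Lagrangian tori into the Thurston--Winkelnkemper construction. Your derivation of $\phi(\Gamma)=\Gamma$ from $I$-invariance of $H_1$, and your count of one dividing curve per half-twist in the model $\ker(\sin\pi t\,dx+\cos\pi t\,dy)$, are both correct and consistent with what the paper does implicitly.

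The one place where you diverge, and where your argument is soft, is the final identification of $\xi$ with ``$\xi_{(L,\Sigma)}$ plus half-twists.'' You attempt a local cut-and-compare: assert that the two structures ``agree away from neighborhoods of the $T_j$'' and then match local models near each torus. As stated this is not justified --- two abstractly $I$-invariant structures on the cut-open mapping torus do not canonically agree anywhere until you have invoked a uniqueness statement, and you yourself flag the matching of gluing maps as an unresolved obstacle. The paper avoids this entirely by running the logic in the other order: first it establishes that a contact structure glued from two $I$-invariant handlebody pieces is tight on $M\setminus L$ and \emph{uniquely determined} by the dividing data $\Gamma\cup\mathcal{C}$ on $D\Sigma$ (the Torisu cutting-disk argument again); then it constructs one global model --- Thurston--Winkelnkemper with the $T_i$ arranged pre-Lagrangian, followed by $n_i$ half-twists along $T_i$, noting that the original Reeb field survives as a contact vector field so the result is quasi-compatible with convex page carrying exactly $\Gamma\cup\mathcal{C}$ --- and concludes $\xi$ equals this model by uniqueness. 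No local gluing ever has to be matched by hand. You should replace your local comparison with this model-plus-uniqueness step; all the ingredients are already present in your write-up (you invoke the relevant uniqueness for the (b)$\Leftrightarrow$(d) direction), they just need to be redeployed to close the last step rather than left as an acknowledged gap.
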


\begin{proof}  
Once one observes the dividing set on $D\Sigma$ is as described above, the only non-obvious statement in the first paragraph is a slight generalization of the proof by Torisu.  Since $\xi$ is given by gluing together two $I$-invariant neighborhoods, it must be tight on $M\setminus L$ and uniquely determined by $\Gamma \cup \mathcal{C}$.  By building a model quasi-compatible contact structure, just as in the Thurston-Winkelnkemper construction but using a fiber with dividing set $\Gamma$, %\cup \mathcal{C}$ 
the proof is complete.  We construct a model contact structure, quasi-compatible with $(L, \Sigma)$ and having the desired dividing set on $\Sigma$ as follows.  Begin with a contact structure $\xi$ compatible with $(L, \Sigma)$.  For suitable choices in the Thurston-Winkelnkemper construction, the tori $T_i$ are pre-Lagrangian.  The proof is finished by observing that the Reeb vector field on $\xi_{(L,\Sigma)}$ that exhibits compatibility remains a contact vector field after adding half-twists along tori that are transverse to the open book $(L,\Sigma)$.   Thus, after adding half-torsion of order $n_i$ along tori $T_i$, the surface $\Sigma$ is convex and has dividing set $\Gamma \cup \mathcal{C}$.  This construction also clearly proves the second part of the lemma.
%As a final remark, notice that while we only get a single half-twist along each component of $\Gamma$, the fact that $\Gamma$ is a dividing set ensures that the resulting plane field is again orientable.
\end{proof}

\begin{proof}[Proof of Theorem \ref{thm:G-compat}]
In set-up, we assume that $(L,\Sigma)$ is a fibered link in $(M,\xi)$, that $L$ is transverse to $\xi$ with $sl_\xi{(L,\Sigma)} = -\chi(\Sigma)$, and that $\xi|_{M\setminus L}$ is tight.
We begin by isotoping  $\Sigma$ so
that its characteristic foliation is Morse-Smale. As mentioned above, working with convex surfaces
with transverse boundary can be tricky, so before we construct the contact vector field
transverse to $\Sigma$, we first adjust the characteristic foliation on $\Sigma$ so that $\partial 
\Sigma$ is ``protected'' from dividing curves. Specifically, we prove the following.
\begin{lem}
All negative hyperbolic singularities in the characteristic foliation of $\Sigma$ can be cancelled by
a $C^0$-small isotopy supported away from $\partial \Sigma.$
\end{lem}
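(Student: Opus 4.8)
The plan is to combine two facts: the sign structure of the decomposition $\Sigma = \Sigma_+\cup\Sigma_-$ from Lemma~\ref{sl-formula}, and the classical elimination lemma of Giroux for an elliptic and a hyperbolic point of the same sign joined by a single leaf. First I would record the basic dynamical picture. Since $\partial\Sigma = L$ is positively transverse to $\xi$, the characteristic leaf $T\Sigma\cap\xi$ is nowhere tangent to $L$, so the (oriented) characteristic foliation crosses $\partial\Sigma$ transversally and exits there; hence the region adjacent to $\partial\Sigma$ has positive divergence, i.e. $\partial\Sigma$ is adjacent to $\Sigma_+$ and the negative region $\Sigma_-$ lies in the interior of $\Sigma$. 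Along $\Gamma = \partial\Sigma_-$ the flow crosses from $\Sigma_+$ into $\Sigma_-$, so $\Sigma_-$ is forward-invariant. In particular every negative hyperbolic point, and every separatrix emanating from one, is confined to $\Sigma_-$ and therefore stays away from $\partial\Sigma$.

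Next I would reduce the problem to a single application of Giroux's elimination lemma. Since the components of $\Sigma_-$ do not contain $\partial\Sigma$, Theorem~\ref{realize} lets me modify the characteristic foliation freely inside $\Sigma_-$, keeping it divided by $\Gamma$; so I may arrange that it is Morse-Smale with no closed orbits in $\Sigma_-$. Now fix a negative hyperbolic point $q$. Each of its unstable separatrices flows forward and, by the confinement above, remains in the forward-invariant region $\Sigma_-$; as the foliation is Morse-Smale it has no saddle connections, and as there are no closed orbits in $\Sigma_-$, this separatrix must limit to a sink, i.e. to a negative elliptic point $p$. The separatrix from $q$ to $p$ is then a single leaf joining an elliptic and a hyperbolic point of the same (negative) sign, exactly the configuration to which Giroux's elimination lemma applies.

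I would then run this as an induction. Applying Giroux's elimination lemma to $(p,q)$ removes both singularities by a $C^0$-small isotopy supported in a neighborhood of the connecting leaf; since that leaf lies in $\Sigma_-$, the isotopy is supported away from $\partial\Sigma$, and the operation preserves convexity, the dividing set $\Gamma$, and the Morse-Smale and no-closed-orbit properties, while decreasing both $h_-$ and $e_-$ by one and preserving forward-invariance of the (slightly shrunken) $\Sigma_-$. Iterating cancels the negative hyperbolic points one at a time until $h_-=0$. This matches the hypothesis $sl_\xi(L,\Sigma)=-\chi(\Sigma)$: combining \eqref{slformula} with $\chi(\Sigma)=\chi(\Sigma_+)+\chi(\Sigma_-)=(e_+-h_+)+(e_--h_-)$ (Lemma~\ref{sl-formula}) shows that the Bennequin bound is attained exactly when $e_-=h_-$, so the cancellation process terminates with $e_-=h_-=0$ as well.

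The heart of the argument, and the step I expect to be the most delicate, is guaranteeing that each negative hyperbolic point is joined to a negative elliptic point by a single leaf; this means ruling out that the unstable separatrix spirals into a closed orbit, runs into another saddle, or escapes toward $\Sigma_+$ or $\partial\Sigma$. The confinement of the negative dynamics to the forward-invariant interior region $\Sigma_-$ rules out the last two, the Morse-Smale condition rules out saddle connections, and the realization step rules out closed orbits. The one point requiring genuine care is performing this last normalization without disturbing the dividing set $\Gamma$ or the behavior near the transverse boundary, which is precisely what the transverse-boundary version of Giroux realization (Theorem~\ref{realize}) is designed to allow.
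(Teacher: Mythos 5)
Your proposal goes in the opposite direction from the paper's proof, and the direction matters. The paper does not cancel each negative hyperbolic point against the sink at the end of its unstable separatrix; it cancels each negative \emph{elliptic} point $p$ against a negative hyperbolic point found on the boundary of the basin $B_p$ of $p$, and the existence of that negative hyperbolic partner is exactly where tightness enters: if every hyperbolic point on $\partial B_p$ were positive, cancelling those positive pairs would produce an overtwisted disk, contradicting the hypothesis that $\xi|_{M\setminus L}$ is tight. Your argument never invokes tightness, and a proof of this lemma that does not use tightness cannot be complete, since the conclusion genuinely depends on it.

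Concretely, here is where your induction breaks. At this stage of the paper's argument one does not yet know that $\Sigma_-$ is a union of annuli --- that is deduced \emph{after} this lemma, using the boundary-parallel dividing curves the lemma provides, Legendrian realization, and tightness. A priori $\Sigma_-$ can have a disk component $D$ (one negative elliptic point, no negative hyperbolic point) balanced by a component $P$ with $\chi(P)<0$ (more negative hyperbolic than negative elliptic points). Since unstable separatrices of negative hyperbolic points are trapped in their own component of $\Sigma_-$, the global count $e_-=h_-$ does not give a usable matching: in $P$ you exhaust the elliptic points first, after which the remaining negative hyperbolic points' separatrices must limit to attracting closed orbits and cannot be cancelled, while the elliptic point in $D$ is never reached at all. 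Relatedly, your claim that each elimination preserves the ``no closed orbits in $\Sigma_-$'' property is false --- cancelling an $e_-,h_-$ pair on an annular component of $\Sigma_-$ necessarily recreates an attracting orbit --- so even in the good case the inductive hypothesis is not maintained. The basin argument of the paper avoids all of this because the basin of a negative elliptic point is not confined to a component of $\Sigma_-$, and tightness guarantees a negative hyperbolic point on its boundary regardless of the topology of $\Sigma_-$.
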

%One might wish to skip the proof of this lemma for the moment so as to see why we are interested in canceling these singularities. 
\begin{proof}
We begin by introducing positive elliptic/hyperbolic pairs of singularities along positive periodic 
orbits in the characteristic foliation of $\Sigma$ so we can assume there are no such periodic
orbits.  
Notice that since $sl(K)=-\chi(\Sigma)$
we can use Formula~\ref{slformula} to conclude that the number of negative elliptic points in
the characteristic foliation of $\Sigma$ is equal to the number of negative hyperbolic points: 
$e_-=h_-.$ If $p$ is a negative elliptic point, then let $B_p$ be the closure of all points in $\Sigma$
that lie on a leaf that limits to $p.$ This is called the {\em basin} of $p.$ The interior of the basin is
clearly an embedded open disk in $\Sigma.$ Moreover, $B_p$ is an immersed disk in $\Sigma$ possibly 
with double points along its boundary, see \cite{Eliashberg92a, Etnyre03}. If this basin is an embedded 
disk, then it is easy to see its boundary is made up of positive elliptic points and the stable manifolds
of hyperbolic points. If all the hyperbolic points are positive then we can cancel them with the
elliptic points to create an overtwisted disk. Thus there must be a negative elliptic point on 
the boundary of the basin with which we can cancel $p.$ If $B_p$ is not embedded then there must
still be a negative hyperbolic point on $\partial B_p.$ See \cite{Eliashberg92a, Etnyre03}. Thus we
can always eliminate $p.$ In particular, we can eliminate all the negative elliptic points without
moving $\partial \Sigma.$ Thus, since we must have $e_-=h_-,$ we have also eliminated all the 
negative hyperbolic points as well. 
\end{proof}

Assuming there are no negative hyperbolic singularities, we construct a contact vector field
transverse to $\Sigma$ as in the proof of Lemma~\ref{dividing} sketched above. We see that there 
will be a dividing curve parallel to each boundary component and that the annulus it cobounds with the boundary 
has characteristic foliation consisting of arcs running from one boundary component to the other. Denote these dividing curves by ${C}$, and let
$\Gamma$ denote the remainder of the dividing set on $\Sigma.$   %considered in the first part of this section. 
Thus by Theorem~\ref{realize}, we can realize any characteristic foliation on $\Sigma$ that is 
divided by our given dividing curves and is the given foliation on the annuli cobounded by ${C}$ and $\partial \Sigma.$  Since $\xi$ is 
assumed to be tight this implies that no dividing curve on $\Sigma$ bounds a disk. Thus the 
components of $\Sigma_\pm$ have non-positive Euler characteristic.  From 
Lemma~\ref{sl-formula} we conclude that $\chi(\Sigma_-)=0$ and that $\Sigma_-$ is a union of 
annuli. (Notice that if we did not have the dividing curves parallel to the boundary, we could not use the Legendrian realization principle and 
conclude that the components of $\Sigma_\pm$ had non-positive Euler characteristic. Thus we 
say these dividing curves ``protect'' the boundary.) Now we can transversely isotop $L$ past $C$ to create a new convex surface, with boundary $L$, having dividing set $\Gamma$ and characteristic foliation as considered in the first part of this section.

From the proof of Lemma~\ref{lem:quasi-compatible} and the set-up above, it is clear that Theorem~\ref{thm:G-compat} is equivalent to showing that $L$ has maximal self-linking number if and only if $L$ is quasi-compatible with $\xi$.  In order to prove $L$ is quasi-compatible with $\xi$, we need two lemmas which are proved below.
\begin{lem} \label{lm:Reduce} If $\phi$ is the monodromy map of the fibration of $M\setminus L$ and  $\phi(\Gamma)$ is not isotopic to $\Gamma$, then we may find a new convex fiber surface $\Sigma'$ for $L$, isotopic to $\Sigma$, with one of the following being true:
\begin{itemize}
\item $\Sigma'$ has fewer dividing curves than $\Sigma$, or
\item $\Sigma'$ has the same number of dividing curves but fewer null-homologous components.
\end{itemize}
\end{lem}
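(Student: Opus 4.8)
The plan is to read off the failure of quasi-compatibility from the handlebody $H_1$ and convert it into a bypass that simplifies $\Sigma$. Recall that, as the boundary of $H_1\cong\Sigma\times[0,1]$, the double $D\Sigma$ carries the dividing set $\Gamma$ on the page $\Sigma\times\{1\}$, the curve $\mathcal{C}$ parallel to $L$, and $\phi^{-1}(\Gamma)$ on the page $\Sigma\times\{0\}$. First I would fix a collection of arcs $\alpha_1,\dots,\alpha_n$ cutting $\Sigma$ into a disk and make the product disks $A_i=\alpha_i\times[0,1]$ in $H_1$ convex. Since $\xi$ is tight on $M\setminus L$, no $A_i$ carries a closed dividing curve, so $\Gamma_{A_i}$ is a union of arcs; each arc is either \emph{vertical}, running from $\alpha_i\times\{0\}$ to $\alpha_i\times\{1\}$, or \emph{$\partial$-parallel}, with both endpoints on one of the two horizontal edges (arcs meeting the vertical edges are controlled near the binding and may be ignored).

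The first key step is to produce a $\partial$-parallel arc. If every $A_i$ had only vertical dividing arcs, then the product structure on $H_1$ would match $\Gamma$ with $\phi^{-1}(\Gamma)$, which is exactly the configuration of the fourth bullet of Lemma~\ref{lem:quasi-compatible}; this would force $\xi$ to be quasi-compatible and hence $\phi(\Gamma)\sim\Gamma$, contrary to hypothesis. (Equivalently, an arc $\alpha_i$ with $\alpha_i\cdot\Gamma\ne \alpha_i\cdot\phi^{-1}(\Gamma)$ --- which must exist by Lemma~\ref{det-curves} and Remark~\ref{rem:curves}, since $\phi$ fixes $\partial\Sigma$ and $\xi$ is tight, so neither multicurve has a trivial or unmatched boundary-parallel component --- cannot have all vertical arcs on $A_i$.) Thus some $A_i$ has a $\partial$-parallel dividing arc, and an innermost such arc cuts off a bypass half-disk for $\Sigma\times\{1\}$ or $\Sigma\times\{0\}$.

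Next I would make this bypass usable. Its attaching arc $\delta$ lies on a single page with both endpoints on $\Gamma$; using Lemma~\ref{moveleft} I would slide $\delta$, pushing an endpoint across successive dividing curves toward the binding, until one endpoint lies on $\mathcal{C}$ while the half-disk stays inside $D\Sigma-\Sigma\times\{0\}$ (or $D\Sigma-\Sigma\times\{1\}$). By the observation recorded just before the definition of quasi-compatibility, pushing $\Sigma$ across such a bypass is induced by an isotopy of $\Sigma$ that keeps $L=\partial\Sigma$ fixed, so the result is a convex fiber surface $\Sigma'$ isotopic to $\Sigma$.

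Finally I would compute the effect on the dividing set. The bypass reattachment is local: with one endpoint of $\delta$ on $\mathcal{C}$ and the other two intersection points on $\Gamma$, the move reconnects the dividing curves meeting $\delta$. Tightness forbids it from creating a homotopically trivial component, so the outcome is either the amalgamation of two dividing curves into one --- strictly lowering the total count --- or, when the count is unchanged, the conversion of a separating (null-homologous) component into a non-separating one, lowering the number of null-homologous components. Either alternative is the conclusion of the lemma. I expect the main difficulty to be exactly this last step: verifying that the bypass produced above, after the slide of Lemma~\ref{moveleft}, can always be arranged so that the standard bypass move strictly decreases the two-tier complexity rather than leaving it unchanged or increasing it, and checking that the slide can always be driven onto $\mathcal{C}$ on the correct side of $D\Sigma$ so that $\Sigma'$ is genuinely isotopic to $\Sigma$ rel $L$.
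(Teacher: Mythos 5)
Your mechanism for producing the initial bypass is a legitimate alternative to the paper's: the paper instead uses Lemma~\ref{det-curves} and Remark~\ref{rem:curves} to find a \emph{closed} curve $\alpha$ with $\alpha\cdot\Gamma\neq\alpha\cdot\phi(\Gamma)$, Legendrian realizes it on both pages, and applies the Imbalance Principle to the convex annulus $\alpha\times[0,1]$, whereas you read the imbalance off a convex product-disk decomposition of $H_1$ and invoke the fourth bullet of Lemma~\ref{lem:quasi-compatible}. That route is workable (it is essentially how the paper argues in Lemma~\ref{lm:Remove}), modulo one technicality you wave away: the dividing arcs of $A_i$ meeting the vertical edges of the square are not automatically ignorable, and controlling them is precisely the reason the paper arranges the protecting curves $\mathcal{C}$ and keeps $\Gamma$ away from $\partial\Sigma$ before any of this starts.

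The genuine gap is in your last step, which is where essentially all of the work in the paper's proof lives. Your claimed dichotomy --- that the bypass move either amalgamates two dividing curves or converts a separating component into a non-separating one --- is false for an arbitrary bypass. If the attaching arc meets only two dividing curves, these are parallel copies of a single isotopy class $b_1$ (since $\Sigma_-$ is a union of annuli) cobounding an annulus $A$, and pushing across the bypass replaces them by two curves parallel to a \emph{different} class $b_2$: the count of dividing curves is unchanged, and $b_2$ may perfectly well be null-homologous while $b_1$ was essential, so the move can \emph{increase} the number of null-homologous components; the bypass can also be trivial and change nothing. Ruling this out is exactly the content of the paper's Cases 1--3: one distinguishes whether $A\cup\nu B$ is a punctured torus or a pair of pants, whether $b_1$ is homologically essential, and whether the third boundary component $b_3$ is boundary-adjacent, cobounds with $b_1$, or bounds, and in each bad configuration one uses Lemma~\ref{moveleft} to slide the attaching arc to a \emph{different} bypass that genuinely involves a third dividing curve or lands in a good configuration. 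Your proposal instead uses Lemma~\ref{moveleft} only to drag an endpoint onto $\mathcal{C}$, which is neither necessary (a bypass whose three intersection points lie on $\Gamma$ already yields an isotopy of $\Sigma$ fixing $L$, since $\Gamma$ is disjoint from $\partial\Sigma$) nor sufficient to force the complexity to drop. Since you explicitly defer this step, the proof as written does not establish the lemma.
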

\begin{lem} \label{lm:Remove} If $\phi(\Gamma)$ is isotopic to $\Gamma$, then either $\xi$ is quasi-compatible with $(L, \Sigma)$ or we may find a bypass for $\Sigma$ that either reduces the number of dividing curves or reduces the number of null-homologous components of $\Gamma$.
\end{lem}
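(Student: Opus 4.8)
The strategy is to certify quasi-compatibility through the fourth equivalent condition in Lemma~\ref{lem:quasi-compatible}, namely by exhibiting a system of arcs cutting $\Sigma$ into a disk whose associated product disks carry only product dividing curves; failing that, the obstruction to this will itself be a boundary-parallel dividing arc, which is precisely the data of a bypass for $\Sigma$. Since the Heegaard splitting has been arranged so that $\xi|_{H_2}$ is $I$-invariant, the product disks in $H_2$ are automatically of product type, and the entire analysis takes place in $H_1$, where the monodromy $\phi$ enters.

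First I would choose properly embedded arcs $\alpha_1,\dots,\alpha_n$ cutting $\Sigma$ into a disk, taken efficient with respect to $\Gamma$ (no bigons), and form the convex product disks $D_i=\alpha_i\times[0,1]\subset H_1$. Because $\xi$ is tight on $M\setminus L$, no dividing curve on the disk $D_i$ can bound a subdisk, so $\Gamma_{D_i}$ consists solely of properly embedded arcs. Their endpoints on $\partial D_i$ are prescribed by the dividing set of $D\Sigma$: there are $\alpha_i\cdot\Gamma$ points on the top edge $\alpha_i\times\{1\}$, the same number on the bottom edge $\alpha_i\times\{0\}$ --- and here the hypothesis that $\phi(\Gamma)$ is isotopic to $\Gamma$ (equivalently $\phi^{-1}(\Gamma)\simeq\Gamma$) is exactly what equalizes the top and bottom counts --- together with one endpoint coming from $\mathcal{C}$ on each vertical side.

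Now comes the dichotomy. If, for every $i$, the dividing arcs of $D_i$ all run from the bottom edge to the top edge (apart from the single arc joining the two $\mathcal{C}$-endpoints), then each $D_i$ has a product dividing set and Lemma~\ref{lem:quasi-compatible} yields that $(L,\Sigma)$ is quasi-compatible with $\xi$, the first alternative. Otherwise some $D_i$ carries a dividing arc with both endpoints on its top edge or both on its bottom edge. Choosing such an arc outermost, it cuts a bypass half-disk off $D_i$ along a subarc $\delta$ of $\alpha_i\times\{1\}$ (or $\times\{0\}$); since $\delta$ lies on the copy of $\Sigma$ in $D\Sigma$ and can be taken disjoint from $L=\partial\Sigma$ (or, should it meet the boundary region, absorbed via the observation recorded just before the definition of quasi-compatibility), this half-disk is an honest bypass for $\Sigma$ that may be pushed across while fixing $L$.

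The heart of the matter --- and the step I expect to be the main obstacle --- is to verify that pushing $\Sigma$ across this bypass strictly lowers complexity in one of the two senses of the lemma. Here I would first put $\delta$ into a standard position meeting $\Gamma$ in three points, using Lemma~\ref{moveleft} to slide bypasses into favorable position, and then apply the standard bypass move to the dividing set. A case analysis on how the three points of $\delta\cap\Gamma$ are distributed among the components of $\Gamma$ should show that the move either amalgamates two dividing curves, lowering the total count, or leaves the count fixed while removing a null-homologous component. The two standing hypotheses are what preclude the complexity-increasing outcomes: efficiency of the $\alpha_i$ eliminates spurious crossings, and the condition that $\phi(\Gamma)$ is isotopic to $\Gamma$ ensures that a boundary-parallel dividing arc on $D_i$ genuinely records excess twisting in $H_1$ --- which a bypass can only undo --- rather than an artifact of the chosen cut system. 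Checking that every case falls into one of the two permitted reductions, and that no new homotopically trivial dividing curve is created (which tightness forbids in any event), is where the genuine bookkeeping lies.
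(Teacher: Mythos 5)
Your proposal is correct and follows essentially the same route as the paper: the paper's (much terser) proof likewise characterizes quasi-compatibility by the existence of a product disk decomposition of $M\setminus D\Sigma$, extracts a bypass from its failure, and defers the complexity-reducing case analysis to the argument of Lemma~\ref{lm:Reduce} via Lemma~\ref{moveleft}. Your version simply fills in more of the details (outermost boundary-parallel arcs on the product disks, the role of $\phi(\Gamma)\simeq\Gamma$ in balancing endpoint counts) that the paper leaves implicit.
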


%Now, as in the proof of Torisu's Lemma~\ref{support} notice that $\xi$ is quasi-compatible with $\xi_{(L,\Sigma)}$ if and only if there is a convex disk decomposition of $M \setminus D \Sigma$ where the dividing set of each disk is a set of parallel arcs that run from $\Sigma \times \{0\}$ to $\Sigma \times \{1\}$.  
We apply Lemmas \ref{lm:Reduce} and \ref{lm:Remove} to simplify the dividing set on $\Sigma$, noting that at every stage we either reduce the number of dividing curves or the number of null-homologous components, until we are left with either an $(L,\Sigma)$ which is quasi-compatible with $\xi$ or a convex surface $\Sigma$ with $\Gamma=\emptyset$ which, by Torisu's Lemma~\ref{support}, implies that $(L,\Sigma)$ is compatible with $\xi.$ The theorem now clearly follows from Lemma~\ref{lem:quasi-compatible}.
%an arc decomposition with no bypasses for $\Sigma$ on it.  This can happen by either reducing the dividing set of $\Sigma$ to $\mathcal{C},$ that is $\Gamma=\emptyset,$ or $\Gamma\not=\emptyset$ and  we have the above mentioned convex disk decomposition of $M\setminus D\Sigma.$ Thus $\xi$ is either compatible or quasi-compatible with $(L,\Sigma).$ %Such an arc decomposition shows that $\xi$ is composed of two $I$-invariant neighborhoods which glue and exhibit the quasi-compatiblility of $\xi$ and $(L, \Sigma)$.
\end{proof}

\begin{proof}[Proof of Lemma~\ref{lm:Reduce}]  Since $\phi(\Gamma)$ is not isotopic to $\Gamma$, we apply Lemma \ref{det-curves} and Remark~\ref{rem:curves} to find a simple closed curve  $\alpha$ on $\Sigma$ that hits $\Gamma$ a different number of times than $\phi(\Gamma)$.  Legendrian realize $\alpha$ on $\Sigma \times \{\pm 1\}$ and look at a convex annulus traced out by $\alpha$.  (Note, if $\alpha$ doesn't intersect one of $\Gamma$ or $\phi(\Gamma)$ we may need to 
use the super Legendrian realization trick where one first realizes a Legendrian curve elsewhere on the surface, then creates two dividing curves parallel to it by folding the surface.  See \cite{Honda00b}. If that is the case, the bypass we find will be on the other copy of $\Sigma$ and so folding will not affect the number or pattern of dividing curves on our fiber surface.)  By the Imbalance Principle \cite[Proposition 3.17]{Honda00a}, there exists a bypass for $D\Sigma$ along $(\Sigma \times\{ i \})$, for either $i=0$, or $i= 1$.  We will show that we can now either reduce the number of dividing curves or reduce the number of null-homologous components of $\Gamma$. Call the bypass arc $B$.

The case we would like to reduce to is when the bypass $B$ involves three different curves.  In this case, pushing $\Sigma$ across this bypass will reduce the number of dividing curves, thus finishing the proof.

When $B$ involves only two different curves (note by choosing the original curve $\alpha$ to intersect $\Gamma$ minimally, $B$ must involve either two or three dividing curves), we use Lemma~\ref{moveleft} to find a bypass that either reduces $|\Gamma|$, or preserves $|\Gamma|$ but decreases the number of null-homologous components of $\Gamma$.  Since the two curves involved must be parallel (recall that $\Sigma_-$ consists entirely of annuli), they cobound an annulus $A$.  Label the isotopy class of the curves by $b_1$.  Then $A$ together with a small neighborhood of $B$ is either a punctured torus or a pair of pants.  

{\bf Case 1:} $A \cup \nu B$ is a torus.  Applying Lemma \ref{moveleft}, we may find a bypass along any arc that intersects $A$ as $B$ does and then has one of its end points on some other dividing curve. Thus we may find a different bypass involving a third component of $\Gamma \cup \mathcal{C}$ elsewhere in $\Sigma$.  Pushing $\Sigma$ across this bypass reduces the number of dividing curves.

For the remaining cases, $A \cup \nu B$ is a pair of pants.  As long as $b_1$ is homologically essential in $\Sigma$, we can find a bypass involving a third component of the dividing set.  If not, we can only ensure that there is a bypass so that after isotoping $\Sigma$ across it, the two dividing curves are non-separating.  

{\bf Case 2:} $b_1$ is homologically essential.  The pair of pants $P$ has three boundary components: labelled $b_1$, $b_2$ and $b_3$ as follows:  the dividing curves on $A$ are parallel to $b_1$, and become parallel to $b_2$ after pushing $\Sigma$ across the bypass.  The remaining boundary component is $b_3$.  By Lemma \ref{moveleft}, for any arc $B'$ beginning on $b_1,$ exiting $b_3$ and ending on another dividing curve, there is also a bypass for $\Sigma$ along $B'$.  There are three possibilities: (a) $b_3$ is adjacent to the boundary of $\Sigma$, (b) $b_3$ and $b_1$ cobound a subsurface $C_{13}$ or (c) $b_3$ bounds a surface $C_3$.  In the first case, we may find a bypass exiting $b_3$ and involving a third curve, thus reducing $|\Gamma|$. 

In case (b), Lemma \ref{moveleft} gives us a new bypass starting on $b_1$, exiting $b_3$ and possibly returning to $b_1$.  This bypass either involves three dividing curves or $A \cup \nu B'$ is a punctured torus and we are in Case 1.  
If $b_3$ bounds a surface $C_3$, we are in case (c) and we see that $C_3$ cannot be a disk by our choice of $\alpha$ above. Thus we can find a new bypass along an arc $B'$ that starts on $b_1$, intersects $C_3$ in a non-separating curve and then returns to $b_1.$   With respect to this bypass, $b_3'$ is now adjacent to $b_2'$ and hence some component of the boundary of $\Sigma$, as in case (a) above.

%{\bf Case 3:} $b_1$ is homologically essential and $b_3$ is not. We note that the surface $C_3$ that $b_3$ bounds must have genus by our choice of $\alpha$ above. Moreover, the surface $C_1$ adjacent to $b_1$ in $\Sigma\setminus A \cup \nu B$ contains topology as well. Thus we can find a curve $\gamma$ that intersects $C_3$ and $C_1$ in a non separating arc and intersects $A \cup \nu B$ in two arcs running from $b_1$ to $b_3.$ Let $\Sigma'$ be the surface $\Sigma$ after it is pushed across the bypass along $B.$ We can assume that $\Sigma$ and $\Sigma'$ cobound a region $\Sigma\times I.$ The curve $\gamma$ gives an annulus in $\Sigma\times I$ which intersects the dividing set on $\Sigma$ in four points and the dividing set on $\Sigma'$ in no points. We can Legendrian realize the boundary of the annulus, make it convex and thus find a bypass for $\Sigma$ along a subarc of $\gamma.$ Since $\gamma$ intersects $C_1$ and $C_3$ in a non separating arcs one sees that this new bypass will be of the type considered in Case 2.

{\bf Case 3:} $b_1$ bounds a subsurface of $\Sigma$.  If $b_2$ is homologically essential, then pushing $\Sigma$ across $B$
will reduce the number of null-homologous components of $\Gamma$.  If not, then $b_3$ is either adjacent to $\partial \Sigma$ or it also separates a subsurface $C_3$ off of $\Sigma.$ %(possibly containing $\partial \Sigma$).  
The first case can be handle as in Case 2(a) above. The second case gives a configuration similar to Case 2(c) above and allows us to either reduce $|\Gamma|$ or reduce to the case when $b_2$ is homologically essential. %Find an nonseparating, properly embedded arc in $C_3$ and extend it to a bypass arc $B'$ as in Lemma \ref{moveleft}.  With respect to the new arc, neither $b_2$ nor $b_3$ is separating and so pushing $\Sigma$ across $B'$ will decrease the number of null-homologous components of $\Sigma$.
\end{proof}

\begin{proof}[Proof of Lemma~\ref{lm:Remove}] First observe that $\xi$ is quasi-compatible with $\xi_{(L,\Sigma)}$ if and only if there is a convex disk decomposition of $M \setminus D \Sigma$ where the dividing set of each disk is a set of parallel arcs that run from $\Sigma \times \{0\}$ to $\Sigma \times \{1\}$.  Assume then, that $\xi$ is not quasi-compatible with $(L, \Sigma)$ and hence there is some bypass for $\Sigma$.  Just as in Lemma \ref{lm:Reduce}, given a bypass, we can reduce the number of dividing curves except possibly when it involves only a pair of null-homologous curves.  Again, in this case, we can find a bypass that reduces the number of null-homologous components.  
\end{proof}

\section{Classification of Bennequin bound realizing fibered links}
The goal of this section is to prove Theorem~\ref{knotthm} and Corollary~\ref{classifymax}. %and \ref{thm:OT}.

\begin{proof}[Proof of Theorem~\ref{knotthm}]
We begin by proving the second statement in the theorem. 
To this end let $L_1$ and $L_2$ be two transverse links in $(M,\xi)$ realizing the link type $L$ and satisfying $sl_\xi(L_1, \Sigma) = sl_\xi(L_2, \Sigma) = -\chi(\Sigma)$.  By Theorem \ref{thm:G-compat}, $\xi$ is quasi-compatible with both $L_1$ and $L_2$.  Since $L_i$ has maximal self-linking number, the dividing set on $\Sigma$ must consist of a single copy of the boundary in addition to some number of pairs of parallel curves.  By assumption, however, $\xi$ has trivial Giroux torsion and so the dividing set on any quasi-compatible convex fiber surface must be a single copy of the boundary.  Equivalently, $\xi$ is compatible (in the strict sense) with both open books $(L_i,\Sigma)$ $i=1,2.$  Since the open book decompositions are isotopic, there exists an isotopy of contact structures taking $L_1$ to $L_2$, which, by Gray's theorem, induces a contactomorphism of $\xi$ that takes $L_1$ to $L_2.$ 
%The open books are isotopic which induces a contact isotopy from $(\xi, L_1)$ to $(\xi, L_2)$.  Normalizing near $L$, we can think of this as a contact isotopy from $\xi|{ M-L_1}$ to $\xi|{ M-L_2}$.  

If $(M,\xi)$ was the tight contact structure on the sphere then any such isotopy can be made into a isotopy that preserves the contact structure, directly implying that $L_1$ and $L_2$ are transversely isotopic. See \cite{Etnyre05}.
%\Delta(h)$, for some .
%From the proof of Theorem~\ref{main} we see that each one bounds a convex Seifert surface $\Sigma_1$ and $\Sigma_2$ with dividing set a single %copy of $L_1$ and $L_2,$ respectively. (We can also take  $\Sigma_1$ diffeomorphic to $\Sigma_2.$) We easily see that $\Sigma_1$ has a %neighborhood $N_1$ that is contactomorphic to a neighborhood $N_2$ of $\Sigma_2.$ We may, moreover, assume that $\partial N_i$ is convex with %a single dividing curve, that $K_i$ is on $\partial N_i$ and parallel to the dividing curve, and the contactomorphism takes $K_1$ to $K_2.$ %Arguing as in Lemma~\ref{support} we see that the contactomorphism from $N_1$ to $N_2$ may be extended to a contactomorphism of $M$ to$M$ that %takes $K_1$ to $K_2.$
\end{proof}

\begin{proof}[Proof of Corollary~\ref{classifymax}]
It is well-known that the closure of a positive braid is fibered \cite{Stallings78}. In addition Bennequin~\cite{Bennequin83} showed that any closed braid naturally gives a transverse knot with self-linking equal to minus the braid index plus the algebraic length of the word representing the braid.  The result now follows from Theorem~\ref{knotthm}. Bennequin's formula for the self-linking number of a closed braid shows that a strongly quasi-positive knot has a transverse representative with self-linking number equal to minus the Euler characteristic of its quasi-positive surface. 
\end{proof}
%\begin{proof}[Proof of Theorem \ref{thm:OT}]

%The proof rests on two simple observations.  First, we note that adding Giroux torsion doesn't change the homotopy type of the plane field.  Second, observe that the exteriors constructed by $G$-compatibility with different dividing sets are all distinct.  So take any fibered link $(L, \Sigma)$ in $M$ and let $\xi$ be a compatible contact structure.  Then the transverse links $L_i$ obtained by doing a full Lutz twist of order $i$ along $L$ all have maximal self-linking number and reside in overtwisted contact structures in the same homotopy class, that is, in the same contact structure.  Thus the $L_i$ represent infinitely many distinct transverse links in $\xi'$ with the same self-linking number.
%\end{proof}

%This is also a possible way to generate distinct transverse links in a tight contact manifold.  Whenever $\xi$ is obtained by Giroux torsion along tori disjoint from the binding of the open book \emph{in distinct ways}, we realize different exteriors for maximal transverse links in the same contact manifol.

\section{Braid representatives of links}
The goal of this section is to prove the results~\ref{qp-rep} through~\ref{minrep}. To this end we begin by recalling that the closure of any braid represents a transverse knot in the standard tight contact structure on $S^3.$ In addition we know the following.
\begin{thm}[Orevkov Shevchishin 2003, \cite{OrevkovShevchishin03}; Wrinkle 2002, \cite{Wrinkle02}]\label{transmarkov}
Two braids representing the same transverse link are related by positive Markov moves and conjugation in the braid group. 
\end{thm}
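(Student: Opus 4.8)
The plan is to set up the standard dictionary between transverse links and braids and then to upgrade the classical Markov theorem to the transverse category, using the self-linking number as the obstruction to negative stabilization. Recall that the standard tight contact structure $\xi_{std}$ on $S^3$ is supported by the open book whose binding is an unknot and whose pages are disks. A link transverse to $\xi_{std}$ may, after a transverse isotopy, be made positively transverse to every page, so that it becomes the closure of a braid $B$; this is the transverse refinement of Alexander's braiding theorem already implicit in Bennequin's work \cite{Bennequin83}. The content of the theorem is then to describe exactly which braids arise in this way from a single transverse link.

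First I would dispose of the easy direction, that conjugation and positive stabilization are realized by transverse isotopies. Conjugation $B\mapsto wBw^{-1}$ is a braid isotopy and closes up to an ambient isotopy through closed braids, hence through transverse links. Positive stabilization $B\mapsto B\sigma_n$ is a local modification introducing a positive kink, which is realized transversely; since $sl=a(B)-n(B)$, and positive stabilization increases both the algebraic length $a$ and the braid index $n$ by one, it preserves $sl$. By contrast a negative stabilization $B\mapsto B\sigma_n^{-1}$ leaves $a$ unchanged while raising $n$ by one, and so drops $sl$ by two. This asymmetry is the geometric heart of the theorem.

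For the hard direction I would begin with a generic transverse isotopy $L_t$, $t\in[0,1]$, connecting the closures of two braids $B_0$ and $B_1$, and analyze its trace by means of Birman--Menasco braid foliation theory adapted to the contact setting. Foliating a spanning surface for the isotopy by the disk pages and tracking the elliptic and hyperbolic singularities as $t$ varies, the classical Markov theorem already tells me that $B_0$ and $B_1$ are related by conjugations together with stabilizations of \emph{both} signs. The new and decisive input is that $sl$ is constant along $L_t$, which I would exploit to constrain the sequence of moves.

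The main obstacle, and the crux of the whole argument, is to eliminate the negative stabilizations. Since every negative stabilization strictly lowers $sl$ while every positive destabilization raises it, and the total change of $sl$ along the isotopy is zero, the negative stabilizations must occur in cancelling pairs with positive destabilizations. The real work is to show that each such pair can be traded for a sequence of conjugations and positive stabilizations, i.e.\ that the foliation changes forced by the transverse isotopy, the births and deaths of singularities, changes of foliation, and exchange moves, never genuinely require a negative stabilization that cannot be absorbed. In the convex-surface language native to this paper, one phrases this as controlling the bypass attachments induced by $L_t$ and showing that none of them lowers the self-linking number below its fixed value. This positivity-control step is exactly where the two cited arguments diverge: Wrinkle \cite{Wrinkle02} carries it out combinatorially through braid foliations, while Orevkov and Shevchishin \cite{OrevkovShevchishin03} argue more analytically. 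I would follow the braid-foliation route, since it meshes directly with the characteristic-foliation and convex-surface machinery developed in Section~\ref{sec:prelim}.
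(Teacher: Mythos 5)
First, a point of orientation: the paper does not prove Theorem~\ref{transmarkov} at all --- it is imported verbatim from Orevkov--Shevchishin \cite{OrevkovShevchishin03} and Wrinkle \cite{Wrinkle02} and then used as a black box in the proofs of Theorems~\ref{stab} and~\ref{minrep}. So there is no in-paper argument to compare yours against; your proposal has to stand on its own as a proof of the cited result, and as such it has a genuine gap. Your easy direction (conjugation and positive stabilization are realized transversely and preserve $sl$) is fine. But your self-linking bookkeeping in the hard direction is wrong: since $sl = a(B) - n(B)$, a positive destabilization sends $a \mapsto a-1$ and $n \mapsto n-1$ and therefore \emph{preserves} $sl$ rather than raising it; only negative stabilizations (which drop $sl$ by $2$) and negative destabilizations (which raise it by $2$) move the self-linking number. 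So the correct parity statement is that negative stabilizations and negative \emph{de}stabilizations occur in equal numbers along any Markov chain joining $B_0$ to $B_1$.

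More importantly, even the corrected count proves nothing. The classical Markov theorem hands you a sequence of braids interpolating between $B_0$ and $B_1$ whose closures are merely topologically isotopic; the intermediate closures need not lie anywhere near the given transverse isotopy $L_t$, their self-linking numbers genuinely fluctuate, and knowing that the negative moves cancel numerically gives no procedure for commuting a negative stabilization past the intervening conjugations and positive moves so as to meet its cancelling partner. That elimination is the entire content of the theorem, and your proposal explicitly labels it ``the real work'' and then delegates it to the papers being cited --- Wrinkle's braid-foliation analysis of the annulus traced out by the isotopy, or Orevkov--Shevchishin's analysis of the space of braided links. As written, the proposal is a correct description of the shape of the known proofs together with a deferral of their only hard step, so it does not constitute a proof.
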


We also need the following result.
\begin{thm}[Orevkov 2000, \cite{Orevkov00}]\label{qpifstab}
A braid $B$ is quasi-positive if and only if any positive stabilization is quasi-positive. 
\end{thm}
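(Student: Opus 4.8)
The plan is to reduce the statement to the slice version of the Bennequin inequality together with Rudolph's characterization of quasi-positivity. Write $a(\beta)$ for the exponent-sum (algebraic length) of a braid $\beta$ and $n(\beta)$ for its number of strands, and recall that the closure $\widehat\beta$ of $\beta\in B_n$ is a transverse link in the standard tight $S^3$ with $sl(\widehat\beta)=a(\beta)-n(\beta)$. Let $\chi_4(L)$ denote the maximal Euler characteristic of a smooth oriented surface properly embedded in $B^4$ with boundary $L$. The two inputs I would build on are: (i) the slice--Bennequin inequality $a(\beta)-n(\beta)\le -\chi_4(\widehat\beta)$, valid for every braid, coming from the Kronheimer--Mrowka resolution of the local Thom conjecture; and (ii) Rudolph's theorem that a quasi-positive braid with $n$ strands and $m$ bands bounds a quasi-positive surface of Euler characteristic $n-m=n-a(\beta)$ which is genus-minimizing in $B^4$, so that equality holds in (i). I would package these into the characterization that $\beta$ is quasi-positive if and only if $a(\beta)-n(\beta)=-\chi_4(\widehat\beta)$.

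Granting this characterization, the theorem is almost formal and both implications come out together. A positive stabilization replaces $\beta\in B_n$ by $\beta\sigma_n\in B_{n+1}$, so that $a$ and $n$ each increase by one and $a(\beta\sigma_n)-n(\beta\sigma_n)=a(\beta)-n(\beta)$; moreover $\widehat{\beta\sigma_n}=\widehat\beta$ as topological links (positive destabilization is a Markov move), so the two closures share the same $\chi_4$. Hence the quantity $\bigl(a(\beta)-n(\beta)\bigr)+\chi_4(\widehat\beta)$ is unchanged by positive (de)stabilization, and it vanishes for $\beta$ exactly when it vanishes for $\beta\sigma_n$. By the characterization, $\beta$ is quasi-positive if and only if $\beta\sigma_n$ is.

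It is worth recording that the forward implication needs none of this machinery: under the inclusion $B_n\hookrightarrow B_{n+1}$ a quasi-positive word $\prod_k w_k\sigma_{i_k}w_k^{-1}$ remains a product of conjugates of generators, and $\sigma_n$ is itself such a conjugate, so $\beta\sigma_n$ is quasi-positive whenever $\beta$ is. The real content of the theorem is the reverse implication, that a positive destabilization of a quasi-positive braid is again quasi-positive.

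The hard part, and the only place where genuine work is hidden, is the ``only if'' half of the characterization in the first paragraph: that equality in the slice--Bennequin inequality forces quasi-positivity of the \emph{braid}, not merely quasi-positivity of its closure as a link. The inequality direction together with Rudolph's computation gives quasi-positive $\Rightarrow$ equality for free, but the converse must manufacture an honest expression of $\beta$ as a product of conjugates of positive generators out of the numerical coincidence $a(\beta)-n(\beta)=-\chi_4(\widehat\beta)$. I expect this to demand more than the gauge-theoretic input alone: one needs to upgrade a maximal-Euler-characteristic slice surface to a quasi-positive band presentation compatible with the given braid, for instance via a surgery or word-reduction argument on braids realizing the equality, or via the complex-curve description of quasi-positive links of Boileau--Orevkov and Rudolph. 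Should the converse prove unavailable in this clean form, the fallback is Orevkov's original route, an induction on the braid index that carries out the destabilization directly at the level of band words; this avoids the characterization entirely but is combinatorially more delicate.
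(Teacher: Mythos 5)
There is a genuine gap, and it sits exactly where you flagged it. Your entire reduction rests on the characterization ``$\beta$ is quasi-positive if and only if $a(\beta)-n(\beta)=-\chi_4(\widehat\beta)$.'' The ``quasi-positive $\Rightarrow$ equality'' half is indeed available from Rudolph \cite{Rudolph93} via Kronheimer--Mrowka, but the converse --- that the numerical equality forces the \emph{braid} to admit a quasi-positive band presentation --- is not a theorem you can invoke: in this generality it is an open problem, and no amount of gauge theory is currently known to produce the band word. Since the forward implication of Theorem~\ref{qpifstab} is trivial (as you note, $\sigma_n$ is itself a conjugate of a generator), the only content of the theorem is the destabilization direction, and that is precisely the direction your argument derives from the unproven half of the characterization. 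So what remains of your proof after removing the conjectural input is exactly the trivial half of the statement.

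There is also a circularity issue relative to this paper's logic. The paper does not prove Theorem~\ref{qpifstab} at all; it is quoted from Orevkov \cite{Orevkov00}, whose argument is an algebraic/combinatorial manipulation of band words (essentially your ``fallback''), not a consequence of slice-genus bounds. Meanwhile, the closest true statement to your characterization that appears anywhere --- Theorem~\ref{qp-rep}, which establishes ``equality $\Leftrightarrow$ quasi-positive'' for braids representing \emph{fibered strongly quasi-positive} links --- is proved in this paper by \emph{using} Theorem~\ref{qpifstab} together with Corollary~\ref{classifymax} and the transverse Markov theorem. So deducing the stabilization theorem from the characterization inverts the actual direction of implication: here the stabilization theorem is an input to the characterization, not an output of it. If you want an honest proof of Theorem~\ref{qpifstab}, you must carry out the destabilization directly at the level of quasi-positive words, as Orevkov does.
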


\begin{proof}[Proof of Theorem~\ref{qp-rep}]
Let $B$ represent a fibered strongly quasi-positive link $L.$  If $a(B)=n(B)-\chi(L)$ then the transverse link $T_B$ given as the closure of $B$ has self-linking number equal to $-\chi(L),$ so by Corollary~\ref{classifymax}, $T_B$ is transversely isotopic to the closure of a strongly quasi-positive braid. Therefore $B$ becomes strongly quasi-positive by a sequence of positive Markov moves (and braid conjugations) and hence $B$ is quasi-positive by Theorem~\ref{qpifstab}.

For the other implication assume that $B$ is a quasi-positive braid and let $T_B$ be its closure. We know that $sl(T_B)=n(B)-a(B)$ and the algebraic length $a(B)$ of $B$ is just the number of generators $\sigma_{i,j}$ used to represent $B.$ Suppose that 
\[
a(B)< n(B)-\chi(L_B).
\]
Recall that the link $L_B$ that $B$ represents is also the closure of a strongly quasi-positive braid $B'.$ We know that we can construct a Seifert surface for $L$ of genus $g'=\frac 12(1-n(B')+a(B'))$ and that this surface is of minimal possible genus. It is also the minimal slice genus surface (see\cite{Rudolph93}). Since $L$ supports the standard tight contact structure on $S^3$ we know that 
\[a(B')=n(B')-\chi(L_B).\]
By positively (braid) stabilizing either $B$ or $B'$ as necessary we can assume that $n(B)=n(B')$ so that we see $a(B)<a(B').$

By writing $B$ as the product of $a(B)$ conjugates of positive generators in the braid group it is easy to construct a ribbon immersed surface in $S^3$ with boundary $L_B$ (see, for example, \cite{Rudolph05}) and hence a slice surface in $B^4,$ of genus $g=\frac 12(1-n(B)+a(B)).$ Notice that $g<g'$ contradicting the minimality of the slice surface for the closure of $B'.$ Thus we must have $a(B)=n(B)-\chi(L_B).$
\end{proof}

We are now ready to prove the relation between quasi-positive representatives of strongly quasi-positive links.
\begin{proof}[Proof of Theorem~\ref{stab}]
Let $B_1$ and $B_2$ be two quasi-positive braids whose closure is $L,$ a fibered link that is also the closure of a strongly quasi-positive braid $B.$ 
From Theorem~\ref{qp-rep} we know that the transverse links $T_{B_1},T_{B_2}$ and $T_B$ obtained as the closures of these braids all have maximal self-linking number and hence by Corollary~\ref{classifymax} are transversely isotopic. Thus Theorem~\ref{transmarkov} finishes our proof.
%Suppose $B_l$ is an $n_l$-braid that is a product of $k_l$ generators $\sigma_{i,j}.$ It is easy to see $B_l$ bounds a Seifert surface with Euler characteristic $\chi_l=n_l-k_l$ and moreover $B_l$ represents a transverse link with total self-linking number $sl_l=-n_l+k_l.$ Since $sl_l=-\chi_l$ and we know that if $\Sigma$ is the fiber of the fibration of $S^3\setminus L$ then $sl_l\leq -\chi(\Sigma)\leq -\chi_l.$ Thus $sl_1=sl_2=-\chi(\Sigma).$ That is the transverse links associated to $B_1$ and $B_2$ both have maximal self-linking number. Thus by Theorem~\ref{classifymax} we know that the transverse links associated to $B_1$ and $B_2$ are transversely isotopic and so Theorem~\ref{transmarkov} finishes our proof. 
\end{proof}

Lastly we prove our result about minimal braid representatives.
\begin{proof}[Proof of Theorem~\ref{minrep}]
Let $B$ be a strongly quasi-positive braid and let $K_B$ be its closure. If the knot type $K_B$ satisfies the Braid Geography Conjecture then let $B'$ be a minimal braid index braid representing $K_B.$ Set $n=n(B')$ and $w=a(B').$  We know from the conjecture that $n(B)= n+x+y$ and $a(B)=w+x-y$ for some non-negative integers $x$ and $y.$ Moreover,
\[sl(K_{B'})=-n(B)+a(B)= -n+w-2y=sl(K_B)-2y\]
and since $sl(K_{B'})$ is the maximal possible self-linking number for transverse knots in the knot type $K_B$ we see that $sl(K_B)=sl(K_{B'}).$ Hence $K_B$ and $K_{B'}$ are transversally isotopic by Corollary~\ref{classifymax} and related by positive Markov moves by Theorem~\ref{transmarkov}. We can finally conclude, using Theorem~\ref{qpifstab}, that $B'$ is quasi-positive. 
\end{proof}

\section{Knots classifying contact structures}
In this section we prove Theorem~\ref{classifies}. We begin by recalling the following theorem.
\begin{thm}[Colin and Honda 2008, \cite{ColinHonda08}]
On a closed oriented 3-manifold $M,$ every oriented, positive contact structure is supported by an open book whose binding is connected, and whose monodromy is freely homotopic to a pseudo-Anosov homeomorphism. \qed
\end{thm}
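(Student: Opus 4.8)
The plan is to reduce the statement to the Giroux correspondence together with a purely topological analysis of stabilizations, exploiting that every positive (Hopf) stabilization of an open book leaves the supported contact structure unchanged. By Giroux's theorem $\xi$ is supported by \emph{some} open book $(\Sigma_0,\phi_0)$, so it suffices to prove that any open book can be positively stabilized until its binding is connected and its monodromy is freely isotopic to a pseudo-Anosov map. The entire argument then lives in the mapping class groups of the pages, subject to the essential constraint that each stabilization contributes only a \emph{right-handed} Dehn twist.

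First I would connect the binding. A positive stabilization attaches an oriented band to the page along a properly embedded arc and composes the monodromy with a right-handed Dehn twist about the curve that runs once over the new handle. When the two feet of the band lie on \emph{distinct} boundary circles of $\Sigma$, the stabilization merges them, lowering the number of binding components by one (and raising the genus by one). After finitely many such merges the binding is a single circle.

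The substantive step is to make the monodromy pseudo-Anosov up to free isotopy while keeping the binding connected. By the Nielsen--Thurston trichotomy the current monodromy is periodic, reducible, or already pseudo-Anosov, and in the first two cases it preserves (up to isotopy) a proper multicurve or is of finite order. To destroy this structure I would stabilize along arcs whose new twist curves $c$, together with their $\phi$-orbit $\{c,\phi(c),\phi^2(c),\dots\}$, \emph{fill} the stabilized page, and then invoke a criterion of Fathi (in the spirit of Thurston's construction) guaranteeing that composing a fixed mapping class with a sufficiently complicated product of Dehn twists along a filling system admits no invariant reducing multicurve and no finite-order behaviour, hence is pseudo-Anosov after the boundary is forgotten. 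Since passing to free isotopy (equivalently, capping the connected binding) discards the fractional Dehn twist coefficient, this is precisely the freely-isotopic pseudo-Anosov representative required. To respect connectivity I would add these twists in \emph{pairs}: a stabilization with both feet on the single boundary circle momentarily splits it into two, and an immediately following merging stabilization restores a connected binding, the composite contributing a handle together with positive twists along the chosen filling curves.

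The main obstacle is the sign restriction. The classical filling constructions of pseudo-Anosov maps (Thurston, Penner) use Dehn twists of \emph{both} signs along two transverse filling families, whereas stabilization supplies only right-handed twists; the resolution is to let the original monodromy $\phi$ furnish the complementary geometry through the Fathi-type hypothesis that a single added twist curve fills under iteration of $\phi$, rather than requiring two families of prescribed sign. Verifying that such a filling orbit can always be created by finitely many positive stabilizations, uniformly over all starting open books, that the needed twist powers are attainable by parallel stabilizations, and that the split--merge bookkeeping of the previous paragraph can be maintained throughout, is the technical core of the argument.
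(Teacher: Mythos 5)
First, a point of order: the paper you are working from does not prove this statement at all. It is quoted, with attribution and a \qed, from Colin and Honda \cite{ColinHonda08}, and is used in the paper only as an input to Theorem~\ref{classifies}. So your proposal can only be measured against the proof in that reference. Its broad outline you have in fact reproduced: one starts from a Giroux open book, uses positive stabilizations to connect the binding, and then uses further positive stabilizations together with a filling/twist criterion of Fathi to make the monodromy freely homotopic to a pseudo-Anosov map. In that sense the plan points in the right direction, and your identification of the sign restriction as the central difficulty is accurate.

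As a proof, however, there is a genuine gap, and it sits exactly where you defer the ``technical core.'' A positive stabilization composes the monodromy with a single positive Dehn twist about a curve that crosses the co-core of the newly attached handle exactly once. Such a curve is never isotopic to any curve already present on the page, nor to the twist curve of any earlier stabilization, because it meets the new co-core (a properly embedded arc) once while every pre-existing curve meets it zero times, and this intersection number mod $2$ is an isotopy invariant. Consequently ``parallel stabilizations'' produce positive twists about disjoint, pairwise \emph{non-isotopic} curves: no power $T_c^n$ with $n\geq 2$ of a fixed twist, nor even a second twist about the same curve, is ever realizable by stabilization. This defeats the proposed application of Fathi's criterion, which guarantees that $T_c^n\circ h$ is pseudo-Anosov only for $n$ outside a bounded set of consecutive integers --- a set whose location is not controlled and which may well contain $n=1$, the only power a stabilization can supply. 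So the argument you sketch reduces the theorem to precisely the statement that needs proving: that pseudo-Anosovness can be forced using only power-one, positive twists along brand-new curves. Overcoming that mismatch is the substance of Colin--Honda's paper, and until it is supplied your text is a strategy statement rather than a proof.
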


\begin{proof}[Proof of Theorem~\ref{classifies}]
It is clear that if two contact structures $\xi_1$ and $\xi_2$ are isotopic then $|\mathcal{T}_n^{\xi_1}({K})|=|\mathcal{T}_n^{\xi_2}({K})|$ for all $K\in \mathcal{C}_\text{fpa}$ and $n\in \Z.$ Thus we are left to prove the other implication. 

We first note that from the theorem just stated, every contact structure on $M$ is supported by an open book with binding in $\mathcal{C}_\text{fpa}.$  We also notice that given any knot type ${K}$ in $M,$ a contact structure $\xi$ on $M$ is overtwisted if and only if $|\mathcal{T}^\xi_n(K)|>0$ for all $n\in \Z.$ Thus we can distinguish tight from overtwisted contact structures on $M$ using $\mathcal{C}_\text{fpa}.$

For the next two possibilities, let $\xi_1$ and $\xi_2$ be two contact structures which $|\mathcal{T}_n^{\xi_1}({K})|=|\mathcal{T}_n^{\xi_2}({K})|$ for all $K\in \mathcal{C}_\text{fpa}$ and $n\in \Z.$  We will show that $\xi_1$ and $\xi_2$ must be isotopic.  
 
For the first case, assume that $\xi_1$ and $\xi_2$ are both tight contact structures on $M.$ Let $B\in \mathcal{C}_\text{fpa}$ be the binding of an open book supporting $\xi_1.$ Then 
\[
|\mathcal{T}^{\xi_1}_{-\chi(B)}(B)|=1
\]
from Corollary~\ref{paclass} and the fact that transverse knots are classified up to contactomorphism by the contact structures on their complements. 
Hence we have
\[
|\mathcal{T}^{\xi_2}_{-\chi(B)}(B)|=1.
\]
But,  again using Corollary~\ref{paclass}, we see that $B$ supports $\xi_2$ and hence $\xi_1$ and $\xi_2$ are isotopic.

Finally assume that $\xi_1$ and $\xi_2$ are both overtwisted contact structures on $M.$ Let $B\in \mathcal{C}_\text{fpa}$ be the binding of an open book supporting $\xi_1.$ Thus 
 \[
|\mathcal{T}^{\xi_1}_{-\chi(B)}(B)|\geq 2
\]
since there is at least one transverse knot in $\mathcal{T}^{\xi_1}_{-\chi(B)}(B)$ with tight complement and another one with overtwisted complement. Hence 
  \[
|\mathcal{T}^{\xi_2}_{-\chi(B)}(B)|\geq 2.
\]
If all the transverse knots in $\mathcal{T}^{\xi_2}_{-\chi(B)}(B)$ have overtwisted complement then the next lemma implies $|\mathcal{T}^{\xi_2}_{-\chi(B)}(B)|=1.$ Thus there is a $T\in \mathcal{T}^{\xi_2}_{-\chi(B)}(B)$ with tight complement. Since the monodromy of $B$ is pseudo-Anosov the only incompressible tori in $M-N(B),$ where $N(B)$ is a neighborhood of $B,$  are parallel to $\partial (M-N(B)).$ Thus Theorem~\ref{thm:G-compat} says that either $B$ supports the contact structure $\xi_2$ or $\xi_2$ is obtained from the contact structure supported by $B,$ which of course is $\xi_1,$ by some number of full Lutz twists along $B$ (this is, of course, equivalent to adding Giroux torsion along a torus parallel to the boundary of $M-N(B)$).  However, since full Lutz twists do not change the homotopy type of a plane field Eliashberg's classification of overtwisted contact structures \cite{Eliashberg89} implies $\xi_1$ and $\xi_2$ are isotopic. 
\end{proof}
\begin{lem}
Let $T$ and $T'$ be topologically isotopic, null-homologous transverse knots in a contact manifold $(M,\xi).$ If $sl(T)=sl(T')$ then there are neighborhoods $N$ and $N'$ of $T$ and $T',$ respectively, such that $\xi$ restricted to $N$ is contactomorphic to $\xi$ restricted to $N'$ and $\xi|_{\overline{M-N}}$ is homotopic rel boundary as a plane field to $\xi|_{\overline{M-N'}}.$

Moreover, if $\xi|_{\overline{M-N}}$ and  $\xi|_{\overline{M-N'}}$ are both overtwisted then they are isotopic rel boundary as contact structures and there is a contactomorphism of $(M,\xi)$ taking $T$ to $T'$. 
\end{lem}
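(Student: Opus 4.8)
The plan is to prove this lemma in two stages, matching its two assertions: first, a local/global splitting statement about transverse knots with equal self-linking number, and second, an Eliashberg-classification argument for the overtwisted complements.

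For the first statement, I would begin with the standard fact that a null-homologous transverse knot has a standard neighborhood determined up to contactomorphism by its self-linking number. Concretely, a neighborhood of a transverse knot $T$ is contactomorphic to a neighborhood of the $z$-axis in $(\R^3, \xi_{std})$ quotiented appropriately, and the self-linking number records exactly the framing data needed to pin down this germ. So $sl(T) = sl(T')$ immediately yields a contactomorphism $N \to N'$ between standard neighborhoods. The homotopy statement on the complements is then a matter of bookkeeping with the relative Euler class (or Hopf invariant): since $T$ and $T'$ are topologically isotopic and have the same self-linking number, the plane fields $\xi|_{\overline{M-N}}$ and $\xi|_{\overline{M-N'}}$ carry the same relative homotopy data along the boundary, and a null-homologous knot in a fixed topological type forces the obstructions to agree. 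I would make this precise by noting that the difference of the two plane fields (after identifying the complements via the ambient topological isotopy) is measured by an obstruction in $H^2$ and $H^3$ of the complement rel boundary, and that equality of self-linking numbers kills the relevant two-dimensional obstruction while the three-dimensional one is controlled by the common knot type.

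For the second statement, assume both complements are overtwisted. The key tool is \emph{Eliashberg's classification of overtwisted contact structures} \cite{Eliashberg89}: on a compact manifold with boundary, overtwisted contact structures with fixed germ on the boundary are classified up to isotopy rel boundary by their homotopy class as plane fields rel boundary. By the first part of the lemma we already know $\xi|_{\overline{M-N}}$ and $\xi|_{\overline{M-N'}}$ are homotopic rel boundary as plane fields and that the germs on $\partial N, \partial N'$ agree under the contactomorphism $N \to N'$. Therefore Eliashberg's theorem gives an isotopy rel boundary of contact structures from one to the other. Gluing this isotopy on the complement to the already-established contactomorphism on the neighborhoods (which match along the common boundary germ) produces a global contactomorphism of $(M, \xi)$ carrying $T$ to $T'$.

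The main obstacle I anticipate is \emph{organizing the boundary data so that Eliashberg's relative classification can actually be applied}: one must ensure the two contactomorphisms (the local one $N\to N'$ and the germ identification on the boundary) are compatible, so that the homotopy of plane fields on the complement can be taken rel boundary with respect to the \emph{same} parametrization of $\partial(M-N) \cong \partial(M-N')$. In particular, extending the neighborhood contactomorphism to a collar and checking that it is homotopic to the restriction of the ambient isotopy — so that the relative homotopy classes are compared against a consistent boundary framing — is the delicate point. Once the boundary germs and framings are reconciled, the invocation of Eliashberg's theorem and the final gluing are essentially formal.
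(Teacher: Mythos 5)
Your proposal follows essentially the same route as the paper: the first part via a relative obstruction-theoretic comparison of the plane fields on the complements (the paper phrases this as a relative Pontryagin--Thom argument, noting that the relative Euler class of $\xi|_{\overline{M-N}}$ is determined by the Euler class of $\xi$ and $sl(T)$), and the second part via Eliashberg's classification of overtwisted contact structures in the relative setting, glued to the standard neighborhoods. Two small corrections: the contactomorphism $N\to N'$ is just the transverse neighborhood theorem and holds for \emph{any} two transverse knots, so $sl(T)=sl(T')$ is not what ``pins down the germ'' --- its only role is in matching the two-dimensional (relative Euler class) obstruction on the complement; and your claim that the three-dimensional obstruction is ``controlled by the common knot type'' is not an argument --- this is exactly the delicate point, which the paper handles by being careful with the framed-cobordism classification of framed links in the relative Pontryagin--Thom construction (and for which it defers details to a separate note), so your sketch is no less complete than the paper's but should at least name that step rather than assert it.
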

\begin{proof}
The first part of the lemma follows from a relative version of the Pontryagin - Thom construction and the fact that in the given situation the relative Euler class of $\xi$ restricted to $\overline{M-N}$ (or $\overline{M-N'}$) is determined by the Euler class of $\xi$ on $M$ and $sl(T)$ (or $sl(T')$). (One needs to be careful about the framed cobordism classification of framed links coming form the Pontryagin - Thom construction in the relative setting.) For details see \cite{EtnyrePre??}.

The second part of the lemma follows from Eliashberg's classification of overtwisted contact structures \cite{Eliashberg89} which holds in the relative setting. 
\end{proof}

\def\cprime{$'$} \def\cprime{$'$}

%\bibliography{references}
%\bibliographystyle{plain}

\end{document}